\documentclass[journal]{IEEEtran}
\pagenumbering{arabic} \pagestyle{plain}

\usepackage[utf8]{inputenc}
\usepackage{color,soul}
\usepackage[T1]{fontenc}
\usepackage{multicol}
\usepackage{multirow}
\usepackage{textcomp}
\usepackage{bigdelim}
\usepackage{graphicx}
\usepackage{epstopdf}
\usepackage{amssymb}
\usepackage{amsmath}
\usepackage{psfrag}
\usepackage{amsthm}
\usepackage[tight,footnotesize]{subfigure}
\usepackage{cite}
\usepackage{enumerate}
\usepackage{algorithm}
\usepackage{algorithmic}
\usepackage{mathtools}
\usepackage{mathabx}
\usepackage{mathrsfs} 
\usepackage{array}
\usepackage{makecell}
\theoremstyle{definition}
\newtheorem{assumption}{Assumption}
\newtheorem{theorem}{Theorem}

\newtheorem{lemma}{Lemma}

\newtheorem{remark}{Remark}

\makeatletter
\usepackage{etoolbox}
\makeatletter 
\pretocmd\@bibitem{\color{black}\csname keycolor#1\endcsname}{}{\fail}
\newcommand\citecolor[1]{\@namedef{keycolor#1}{\color{blue}}}
\makeatother


\newcommand{\vast}{\bBigg@{3.2}}
\newcommand{\Vast}{\bBigg@{4.5}}

\makeatother
\graphicspath{figs/}

\hyphenation{op-tical net-works semi-conduc-tor}

\begin{document}
	
\title{Improved Hierarchical ADMM for Nonconvex Cooperative Distributed Model Predictive Control}

\author{\IEEEauthorblockN{Xiaoxue Zhang, Jun Ma, Zilong Cheng, Sunan Huang, Clarence W. de Silva, \IEEEmembership{Fellow,~IEEE,}
		and
		Tong Heng Lee}
	\thanks{X. Zhang, Z. Cheng, and T. H. Lee are with the Integrative Sciences and Engineering Programme, NUS Graduate School, National University of Singapore, 119077
		(e-mail: xiaoxuezhang@u.nus.edu; zilongcheng@u.nus.edu; eleleeth@nus.edu.sg).}
	\thanks{J. Ma is with the Department of Mechanical Engineering, University of California, Berkeley, CA 94720 USA
		(e-mail: jun.ma@berkeley.edu).}
\thanks{S. Huang is with the Temasek Laboratories, National University of Singapore, Singapore, 117411 (e-mail: tslhs@nus.edu.sg).}	
\thanks{Clarence W. de Silva is with the Department of Mechanical Engineering, University of British Columbia, Vancouver, BC, Canada V6T 1Z4 (e-mail: desilva@mech.ubc.ca).}
\thanks{This work has been submitted to the IEEE for possible publication. Copyright may be transferred without notice, after which this version may no longer be accessible.}
}
	
	\markboth{}
	{X. Zhang \MakeLowercase{\textit{et al.}}}
	
	\maketitle
	
\begin{abstract}
Distributed optimization is often widely attempted and innovated as an attractive and preferred methodology to solve large-scale problems effectively in a localized and coordinated manner. Thus, it is noteworthy that the methodology of distributed model predictive control (DMPC) has become a promising approach to achieve effective outcomes, e.g., in decision-making tasks for multi-agent systems. However, the typical deployment of such distributed MPC frameworks would lead to the involvement of nonlinear processes with a large number of nonconvex constraints. To address this important problem, the development and innovation of a hierarchical three-block alternating direction method of multipliers (ADMM) approach is presented in this work to solve this nonconvex cooperative DMPC problem in multi-agent systems. Here firstly, an additional slack variable is introduced to transform the original large-scale nonconvex optimization problem. Then, a hierarchical ADMM approach, which contains outer loop iteration by the augmented Lagrangian method (ALM) and inner loop iteration by three-block semi-proximal ADMM, is utilized to solve the resulting transformed nonconvex optimization problem. Additionally, it is analytically shown and established that the requisite desired stationary point exists for convergence in the algorithm. Finally, an approximate optimization stage with a barrier method is then applied to further significantly improve the computational efficiency, yielding the final improved hierarchical ADMM. 
The effectiveness of the proposed method in terms of attained performance and computational efficiency is demonstrated on
a cooperative DMPC problem of decision-making process for multiple unmanned aerial vehicles (UAVs).
\end{abstract}

\begin{IEEEkeywords}
Alternating direction method of multipliers (ADMM), distributed optimization, model predictive control (MPC), multi-agent system, collision avoidance.
\end{IEEEkeywords}

\maketitle

\IEEEdisplaynontitleabstractindextext

\IEEEpeerreviewmaketitle

\section{Introduction}
With the rapid development of communication and computation technologies, cooperative decision making of large-scale multi-agent systems has become a promising trend in the development of automated systems~\cite{he2017continuous, hong2016distributed,ye2020global}. Nevertheless, the significant challenge involved in cooperative decision making, in handling the coupling conditions among the interconnected agents with an appropriate guarantee of efficient computation still remains. In the existing literature, a significant amount of effort has been put into distributed control and coordinated networks for large-scale interconnected multi-agent systems, with the objective of optimizing the global cost while satisfying the required safety constraints collectively. Essentially for such approaches, the control and optimization algorithms deployed are distributed in structure, and the deployment utilizes only the local observations and information~\cite{ma2019parameter,yang2016distributed}. With the appropriate communication technology in place, the agents are developed to suitably make decisions while attempting to satisfy all of the constraints or requirements invoked~\cite{yang2016multi, yuan2011distributed}. On the other hand, a remarkable alternate development that utilizes an approach termed as distributed model predictive control (DMPC) has also been shown to display great promise to be effectively used to handle the input and state constraints with multiple objectives; and possibly also applicable to this important problem of cooperative decision making of large-scale multi-agent systems~\cite{camponogara2009distributed}. In addition, via the DMPC framework, there is great potential that the computational burden can be significantly relieved by decomposing the centralized system into several interconnected subsystems (with also a similar situation of availability of knowledge of necessary information among the connected agents).

For the DMPC problem with interconnected agents, the promising optimization approach of the alternating direction method of multipliers (ADMM) algorithm has the potential for very effective utilization. In this context, it can be noted that a large number of research works have demonstrated that the ADMM methodology has the capability to rather effectively determine the optimal solution to many challenging optimization problems, such as distributed optimization~\cite{hong2017distributed}, decentralized control~\cite{ma2020symmetric, ma2020optimal}, and statistical learning problems~\cite{hu2016admm, ding2019optimal}. The key idea of the ADMM approach is to utilize a decomposition coordination principle, wherein local sub-problems with smaller sizes are solved, and then the solutions of these subproblems can be coordinated to obtain the solution to the original problem. Notably, the ADMM solves the sub-problems by optimizing blocks of variables, such that efficient distributed computation can be attained. The convergence of the ADMM with two blocks has been investigated for convex optimization problems in~\cite{he20121, monteiro2013iteration}, and typical applications of such two-block ADMM approaches in distributed convex optimization problems are reported extensively in~\cite{wei2012distributed, makhdoumi2017convergence}. Several researchers have also made rather noteworthy extensions from this two-block convex ADMM to multi-block convex ADMM in~\cite{he2012alternating, lin2015global}.
However, although seemingly evident as a possibility with great potential, there has not been much substantial work on innovating the ADMM framework for nonconvex DMPC. Classical ADMM is known only to admit a linear convergence rate for convex optimization problems. 

For nonconvex optimization problems, certain additional conditions and assumptions need to be considered to guarantee the required convergence~\cite{li2015global, wang2014bregman, jiang2019structured, wang2019global,wang2019distributed,cheng2021admm,ma2020alternating}. More specifically,
the work in~\cite{li2015global} studies the two-block ADMM with one of the blocks defined as the identity matrix; and it also states that the objective function and the domain of the optimization variable need to be lower semi-continuous. Yet further, a nonconvex Bregman-ADMM is proposed in~\cite{wang2014bregman} by introducing a Bregman divergence term to the augmented Lagrangian function to promote the descent of certain potential functions. 
Moreover, the work in~\cite{jiang2019structured} proposes two proximal-type variants of ADMM with $\epsilon$-stationarity conditions to solve the nonconvex optimization problem with affine coupling constraints under the assumption that the gradient of the objective function is Lipschitz continuous. 
Additionally, in \cite{wang2019global}, a multi-block ADMM is presented to minimize a nonconvex and nonsmooth objective function subject to certain coupling linear equality constraints; and the work in~\cite{wang2019distributed} presents an ADMM-type algorithm for nonconvex optimization problem in the application environment of modern machine learning problems, though requiring the assumption that the penalty parameter should be large enough. Besides, an augmented Lagrangian based alternating direction inexact newton (ALADIN) method is introduced in~\cite{houska2016augmented} to solve the optimization problems with a separable nonconvex objective function and coupled affine constraints.

Noting all these additional conditions and assumptions that typically can arise, it is the situation where the ADMM approach cannot yet be directly applied to solve a large-scale constrained nonconvex program without further relaxation or reformulation. Also, the ADMM with more than two blocks is typically implemented with sequential optimization; and while a direct parallelization of a multi-block ADMM formulation is also sometimes considered, the aforementioned approach has the disadvantage that it is typically less likely to converge~\cite{sun2019two,tang2020fast}. In order to address all these limitations, a hierarchical three-block ADMM approach is utilized in our paper to solve the nonconvex optimization problem that arises for decision making in multi-agent systems. Firstly, an additional slack variable is innovatively introduced to transform the original large-scale nonconvex optimization problem, such that the intractable nonconvex coupling constraints are suitably related to the distributed agents. Then, the approach with a hierarchical ADMM that contains the outer loop iteration by the augmented Lagrangian method (ALM), and inner loop iteration by three-block semi-proximal ADMM, is utilized to solve the resulting transformed nonconvex optimization problem. Next, the approximate optimization with a barrier method is then applied to improve the computational efficiency. Finally, a multi-agent system involving decision making for multiple unmanned aerial vehicles (UAVs) is utilized to demonstrate the effectiveness of the proposed method in terms of attained performance and computational efficiency.

The remainder of this paper is organized as follows. Section~\ref{section:problem_formulation} describes a multi-agent system that contains multiple interconnected agents equipped with sensors and communication devices, and also formulates a large-scale nonconvex optimization problem for decision making. Section~\ref{section:problem_reformulation} gives a more compact form of the optimization problem, which is further transformed by introducing slack variables. In section~\ref{section:two_level_ADMM}, the hierarchical ADMM  is used to solve the transformed optimization problem. Next, an improved version of such hierarchical ADMM is presented to accelerate the computation process based on the barrier method in Section~\ref{section:improved_Hierarchical_ADMM}. Then, in Section~\ref{section:Illustrative_Examples}, a multi-UAV system is used as an example to show the effectiveness and computational efficiency of the hierarchical ADMM and improved hierarchical ADMM. Finally, pertinent conclusions of this work are given in Section~\ref{section:conclusion_and_discussion}.

\section{Problem Formulation}
\label{section:problem_formulation}
The following notations are used in the remaining content. $\mathbb R^{a\times b}$ denotes the set of real matrices with $a$ rows and $b$ columns, $\mathbb R^{a}$ means the set of $a$-dimensional real column vectors. The symbol $X\succ 0$ and $X\succeq 0$ mean that the matrix $X$ is positive definite and positive semi-definite, respectively. $x>y$ and $x\geq y$ mean that vector $x$ is element-wisely greater and no less than the vector $y$. $A^{\top}$ and $x^{\top}$ denote the transpose of the matrix $A$ and vector $x$. $I_{a}$ represents the $a$-dimensional identity matrix; $1_{a}$ and $1_{(a,b)}$ denote the $a$-dimensional all-one vector and the $a$-by-$b$ all-one matrix, respectively; $0_{a}$ and $0_{(a,b)}$ represent the $a$-dimensional all-zero vector and the $a$-by-$b$ all-zero matrix, respectively.  The operator $\langle X, Y\rangle$ denotes the Frobenius inner product, i.e., $\langle X, Y\rangle = \operatorname{Tr}(X^{\top}Y)$ for all $X,Y\in \mathbb R^{a\times b}$; the operator $\|X\|$ is the Frobenius norm of matrix $X$. $\otimes$ denotes the Kronecker product and $\odot$ denotes the Hadamard product (Schur product). $\operatorname{blockdiag}(X_1, X_2, \cdots, X_n)$ denotes a block diagonal matrix with diagonal entries $X_1, X_2, \cdots, X_n$; $\operatorname{diag}(a_1, a_2, \cdots, a_n)$ denotes a diagonal matrix with diagonal entries $a_1, a_2, \cdots, a_n$. $\mathbb Z^{a}$ and $\mathbb Z_{a}^{b}$ mean the sets of positive integers $\{1,2,\cdots,a\}$ and $\{a,a+1,\cdots,b\}$, respectively. $\mathbb Z_{+}$ denotes the set of positive integers $\{1,2,\cdots\}$. The operator $\{x_i\}_{\forall i \in \mathbb Z_a^b}$ means the concatenation of the vector $x_i$ for all $i\in \mathbb Z_a^b$, i.e., $\{x_i\}_{\forall i \in \mathbb Z_a^b} = (x_a, x_{a+1}, \cdots, x_b) = \begin{bmatrix} x_a^\top & x_{a+1}^\top & \cdots & x_b^\top \end{bmatrix}^\top$.

\subsection{Modelling of Interconnected Agents}
In order to represent the constraint or information topology of multiple interconnected agents, an undirected graph $\mathcal G(\mathcal V, \mathcal E)$ can be utilized. The node set $\mathcal V=\{1,2, \cdots, N\}$ denotes the agents, and the edge set $\mathcal E$ denotes the coupling constraints (information flow) between two interconnected agents, which is defined as 
\begin{equation}
\label{eq:comunication_top}
\begin{cases}
(i,j) \in \mathcal E(t), & \; d_{\mathrm{safe}} \leq \|p_i-p_j\|\leq d_{\mathrm{cmu}}\\
(i,j) \notin \mathcal E(t), & \; \text{otherwise},
\end{cases}
\end{equation}
where $N$ and $M$ are the number of agents and coupling constraints in the multi-agent system, respectively, $p_i, p_j$ are the position vectors of the $i$th agent and $j$th agent, $d_{\mathrm{cmu}}$ and $d_{\mathrm{safe}}$ mean the maximum communication distance and minimum safe distance between two agents, respectively. Therefore, based on the communication topology, an adjacency matrix of the network (denoted by $\mathcal D$) can be obtained, which is a square symmetric matrix. The elements of $\mathcal D$ indicate whether pairs of vertices are adjacent/connected or not in the graph. Therefore, the neighbor nodes of the $i$th agent are the corresponding indexes of nonzero elements of the $i$th row in $\mathcal D$, which is denoted by $\nu (i) = \{j|(i,j)\in \mathcal E, \forall j\in \mathcal V\}$.

\subsection{Problem Description}
Each connected agent in the set $\mathcal V$ has its origin and target state, which means each agent needs to achieve its task by coordinating with other connected agents in $\mathcal V$. Moreover, the information of the connected agents (neighbors) is necessary for each agent to make decisions, due to the requirement of the communication topology~\eqref{eq:comunication_top}. Since the communication between connected agents occurs during the whole process, the present state information of neighbors needs to be conveyed to the $i$th agent to make decisions. Thus, the cooperative task can be formulated as a cooperative DMPC problem for each agent $i\in \mathcal V $. Then, given the initial state $x_i(t)$, the DMPC problem for the $i$th agent at timestamp $t$ can be written as
\begin{IEEEeqnarray*} {cl}
\label{eq:problem_init}
\min_{u_i,x_i} \quad  & \sum_{i\in \mathcal V} J_i(x_i,u_i) \IEEEyesnumber \IEEEyessubnumber \label{subeq:cost} \\ 
\operatorname{s.t.} \quad  & x_i(\tau+1) = A_ix_i(\tau) + B_i u_i(\tau) \IEEEnonumber \IEEEyessubnumber \label{subeq:dynamics}\\
& \underline u_{i} \leq u_i(\tau) \leq \overline u_{i} \IEEEnonumber \IEEEyessubnumber \label{subeq:control_bound}\\ 
&\underline x_{i} \leq x_i(\tau+1) \leq \overline x_{i} \IEEEnonumber \IEEEyessubnumber \label{subeq:state_bound}\\
& (x_i(\tau+1), x_j(\tau+1)) \in \mathbb C \IEEEnonumber \IEEEyessubnumber \label{subeq:couple_constr} \\
& \forall j\in \nu(i), \forall \tau\in \mathbb Z_{t}^{t+T-1},
\IEEEnonumber \IEEEnosubnumber
\end{IEEEeqnarray*}
\noindent{where} the subscript $(\cdot)_i$ means the corresponding variable of the $i$th agent, $x_i\in \mathbb R^{n}$ and $u_i\in \mathbb R^{m}$ denote the state and control input vectors of the $i$th agent, respectively. $A_i\in \mathbb R^{n\times n}, B_i\in \mathbb R^{n\times m}$ are the state matrix and control input matrix, respectively. $t$ denotes the initial time stamp and $T$ is the prediction horizon. Notice that \eqref{subeq:cost} is the objective function for the state and control variables; \eqref{subeq:dynamics} denotes the kinematics of the agents; \eqref{subeq:state_bound} and~\eqref{subeq:control_bound} denote the bound constraints of state and control input vectors, $\overline x_{i}$ ($\underline x_{i}$) and $\overline u_{i}$ ($\underline u_{i}$) are the upper (lower) bound of state and control input vectors, respectively; \eqref{subeq:couple_constr} denotes the coupling constraints of two interconnected agents (the $i$th agent and the $j$th agent with $j\in \nu(i)$) and $\mathbb C$ denotes the set of the coupling constraints. The cost function of the $i$th agent $J_i(x_i,u_i)$ is a summation of the stage cost term $\ell_i$ and the terminal cost term $\ell_{T,i}$, i.e., 
\begin{IEEEeqnarray*}{l}
\quad J_i(x_i,u_i)  \\
= \ell_i\left( x_i(t+1), \cdots, x_{i}(t+T-1), u_i(t), \cdots, u_i(t+T-1) \right) \\
+ \ell_{T,i}(x_i(t+T)), \IEEEeqnarraynumspace \yesnumber
\end{IEEEeqnarray*}
where the initial state $x_i(t)$ is known. The stage cost function $\ell_i$ is defined as
\begin{IEEEeqnarray*}{rCl}
	\ell_i(x_i, u_i) 
	&=& \sum_{\tau=t+1}^{t+T-1} \langle x_i(\tau) - x_{i,\text{ref}}(\tau),\: Q_i \left(x_i(\tau) - x_{i,\text{ref}}(\tau) \right) \rangle 
	\\
	&& + \sum_{\tau=t}^{t+T-1} \langle u_i(\tau) \:,\: R_iu_i(\tau) \rangle, \yesnumber
\end{IEEEeqnarray*}
where $x_{i,\mathrm{ref}}$ is the reference state the $i$th agent tries to follow, and $Q_i\in \mathbb R^{n\times n}$ and $R_i\in \mathbb R^{m\times m}$ are the diagonal weighting matrices for the state and control input vectors, respectively. 
The terminal cost function $\ell_{T,i}$ is 
\begin{IEEEeqnarray*}{rCl}
\ell_{T,i}(x_i(t+T)) 
	&=& \langle x_i(t+T) - x_{i,\text{ref}}(t+T) \:,\: \\ 
	&& \:\: Q_{i,T} \left(x_i(t+T) - x_{i,\text{ref}}(t+T) \right) \rangle,	  \yesnumber
\end{IEEEeqnarray*}
where $Q_{i,T}$ is the diagonal weighting matrix for the terminal state vector. 

In a specific multi-agent system, $\mathbb C$ can be specified as 
\begin{IEEEeqnarray*}{c}
\label{eq:constr_coupling}
\mathbb C = \Big\{ (x_i(\tau),x_j(\tau)) \: \Big | \: d_{\mathrm{safe}} \le \left\|L_p(x_i(\tau)-x_j(\tau)) \right\| \le d_{\mathrm {cmu}} , \\
 \quad \forall i\in \mathcal V,  \forall j\in \nu(i), \forall \tau\in \mathbb Z_{t+1}^{t+T} \Big\},	 \yesnumber
\end{IEEEeqnarray*}
where $L_p$ is the locating matrix to extract the position vector $p_i$ from the state vector $x_i$, i.e., $p_i(\tau) = L_p x_i(\tau)$.

\begin{assumption}
	$\nu(i)$ in the prediction horizon $[t+1,t+T]$ is time-invariant.
\end{assumption}



\section{Problem Transformation}
\label{section:problem_reformulation}
\subsection{Problem Reformulation}
First of all, the decision variable is set as 
\begin{IEEEeqnarray*}{rCl}
z_i &=& \big(u_i(t),x_i(t+1),u_i(t+1),x_i(t+2), \cdots, \\ 
&& \quad u_i(t+T-1), x_i(t+T) \big) \in \mathbb R^{T(m+n)}.\yesnumber
\end{IEEEeqnarray*}
It is straightforward that $L_px_i=p_i \in \mathbb R^{n_p}, \forall i\in \mathcal{V}$, where $n_p$ is the dimension of position in state vector $p_i$ and $n_p\leq n$, and $\|p_i(\tau)- p_j(t)\|^2 = \left\langle p_i(\tau)- p_j(t) \:,\: p_i(\tau)- p_j(t) \right\rangle$. For all $j\in \nu(i)$ and $\tau \in \mathbb Z_{t+1}^{t+T}$, the coupling constraints $d_{\mathrm{safe}} \le \|p_i(\tau)-p_j(\tau)\| \le d_{\mathrm {cmu}}$, $\forall j\in \nu(i)$, $\forall i\in \mathcal V$, $\forall \tau\in \mathbb Z_{t+1}^{t+T}$ can be equivalently expressed as
\begin{IEEEeqnarray*}{c}
		d_{\mathrm{safe}}^2 \leq  \left( p_i(t+1)- p_j(t)\right)^\top \left(p_i(t+1)- p_j(t) \right)  \leq d_{\mathrm{cmu}}^2, \\
		\quad \quad \quad \quad \quad \quad \quad \quad \quad \quad \quad \quad \quad \quad \quad \quad \forall j \in \nu(i),
	\\ \vdots \\
		d_{\mathrm{safe}}^2 \leq  \left( p_i(t+T)- p_j(t)\right)^\top \left(p_i(t+T)- p_j(t) \right)  \leq d_{\mathrm{cmu}}^2, \\
		\quad \quad \quad \quad \quad \quad \quad \quad \quad \quad \quad \quad \quad \quad \quad \quad \forall j \in \nu(i),
	\yesnumber
\end{IEEEeqnarray*}
where $r_i$ is the number of the neighbours of the $i$th agent, i.e., $r_i =|\nu(i)|$ where $|\cdot|$ is the cardinality operator for the set $\nu(i)$. This means the $i$th agent has $r_i$ neighbors, and there are $r_i$ inequalities for each $\tau$. Thus, the coupling constraint $d_{\mathrm{safe}} \le \|p_i(\tau)-p_j(\tau)\| \le d_{\mathrm {cmu}}$, $\forall j\in \nu(i)$, $\forall i\in \mathcal V$, $\forall \tau\in \mathbb Z_{t+1}^{t+T}$ denotes $Tr_i$ inequalities for all $\tau$ and $j$.

Define a known vector $[p_j] = \left\{p_j(t)\right\}_{\forall j\in \nu(i)} \in \mathbb R^{r_in_p}$. Notice that the position vector of all neighbors of the $i$th agent $p_j(t)$ are known due to the existence of communication. Thus, the coupling constraints can be rewritten as 
\begin{IEEEeqnarray*}{rCl}
	\mathcal{M}\left(z_i; [p_j] \right) \mathcal{P}\left(z_i; [p_j] \right) &\leq& d_{\mathrm{cmu}}^2\\ 
	\mathcal{M}\left(z_i; [p_j] \right) \mathcal{P}\left(z_i; [p_j] \right) &\geq& d_{\mathrm{safe}}^2, \IEEEeqnarraynumspace \yesnumber
\end{IEEEeqnarray*}
with the mapping $\mathcal P: \mathbb R^{T(m+n)}\rightarrow \mathbb R^{Tr_in_p}$ characterized by
\begin{IEEEeqnarray*}{rCl} 
	\mathcal P\left(z_i; [p_j] \right) = (I_T\otimes1_{r_i}\otimes I_{n_p})M_pz_i - 1_{T} \otimes [p_j], \IEEEeqnarraynumspace \yesnumber
\end{IEEEeqnarray*}
where the matrix $M_p \in \mathbb R^{Tn_p\times T(m+n)}$ is a matrix to extract all position vectors $\left(p_i(t+1),p_i(t+2),\cdots,p_i(t+T)\right)$ from the decision variable $z_i$, and $M_p=I_T \otimes \begin{bmatrix} {0}_{(n_p,m)} & I_{n_p} & {0}_{(n_p,(n-n_p))} \end{bmatrix}$. 
Besides, the mapping $\mathcal M: \mathbb R^{T(m+n)}\rightarrow \mathbb R^{Tr_i \times Tr_in_p} $ is given by
\begin{IEEEeqnarray*}{rCl}
	\mathcal M\left(z_i; [p_j] \right)
	= \left( I_{Tr_i}\otimes 1_{n_p}^\top \right) \left( I_{Tr_in_p}  \odot \left(  {\mathcal P\left(z_i; [p_j] \right)} {1}_{Tr_in_p}^\top \right) \right).  \\ \yesnumber
\end{IEEEeqnarray*}

Therefore, the large-scale cooperative optimization problem~\eqref{eq:problem_init} can be reformulated as
\begin{IEEEeqnarray*}{rCl}
\label{eq:problem_simp1}
\min\limits_{z_i} &\quad& \sum_{i\in \mathcal V} J_i(z_i)  \IEEEnonumber  \\ 
\operatorname{s.t.} &&  G_iz_i = g_i \IEEEnonumber  \label{subeq:dynamics_compact}\\
&& F_{z,i} z_i \leq f_{z,i} \IEEEnonumber \IEEEnosubnumber \label{subeq:bound_compact}\\
&&  {\mathcal{M}\left(z_i; [p_j] \right) \mathcal{P}\left(z_i; [p_j] \right)} \leq d_{\mathrm{cmu}}^2 \IEEEnonumber \\ 
&& {\mathcal{M}\left(z_i; [p_j] \right) \mathcal{P}\left(z_i; [p_j] \right)} \geq d_{\mathrm{safe}}^2 \IEEEnonumber \\ 
&& \forall j\in \nu(i), \forall \tau\in \mathbb Z_{t+1}^{t+T},  \IEEEyesnumber 
\end{IEEEeqnarray*}
where $J_i(z_i) = \langle z_i-z_{i,\text{ref}} \:,\: H_i (z_i-z_{i,\text{ref}}) \rangle$; $H_i =$ $\operatorname{blockdiag}(R_i, Q_i, R_i, Q_i, \cdots,R_i, Q_i, R_i, Q_{T,i})$ is the weighting matrix for the optimization variable $z_i$; $z_{i,\text{ref}}$ is the reference vector, i.e., $z_{i,\text{ref}} =\big(0_{m}, x_{i,\text{ref}}(t+1),$  $0_{m}, x_{i,\text{ref}}(t+2), \cdots, 0_{m}, x_{i,\text{ref}}(t+T)\big)$. Due to the fact that the agents' dynamic models are linear time-invariant during the optimization horizon $[t,t+T]$, we can rewrite~\eqref{subeq:dynamics} as the equality constraint $G_iz_i = g_i$ in~\eqref{eq:problem_simp1},
where
\begin{IEEEeqnarray*}{rCl}
	G_i &=& I_T \otimes \begin{bmatrix}-B_i & I_n \end{bmatrix}  \; + \operatorname{blockdiag} \Big(0_{(n,m)}, \\
	&&\underbrace{\begin{bmatrix} -A_i & 0_{(n,m)} \end{bmatrix}, \cdots, \begin{bmatrix} -A_i & 0_{(n,m)} \end{bmatrix}}_{T-2},  \begin{bmatrix} -A_i & 0_{(n,m+n)} \end{bmatrix} \Big) \IEEEeqnarraynumspace
\end{IEEEeqnarray*}
and $g_i = \Big[ \left(A_ix_i(t)\right)^\top \quad \underbrace{0_n^\top \quad 0_n^\top \quad \cdots \quad 0_n^\top}_{T-1} \Big]^\top$ with a known state vector $x_i(t)$ at current timestamp $t$. 
\noindent Notice that $G_i\in \mathbb R^{Tn\times T(m+n)}$ and $g_i\in\mathbb R^{Tn}$. Additionally, \eqref{subeq:state_bound} and \eqref{subeq:control_bound} can be rewritten as $F_{z,i} z_i \leq f_{z,i}$ in~\eqref{eq:problem_simp1}, where
\begin{IEEEeqnarray*}{rCl}
F_{z,i} = \begin{bmatrix} I_{T(m+n)} \\ -I_{T(m+n)} \end{bmatrix},
f_{z,i} = \begin{bmatrix} { 1_{T} \otimes \begin{bmatrix} \overline u_{i}^\top & \overline x_i^\top \end{bmatrix}^\top } \\ {1_{T} \otimes \begin{bmatrix} -\underline u_{i}^\top & -\underline x_i^\top \end{bmatrix}^\top } \end{bmatrix}.
\end{IEEEeqnarray*}

\begin{remark}
	It is well-known that increasing the prediction horizon $T$ can enlarge the domain of attraction of the MPC controller. However, this will increase the computational burden. Weighting the terminal cost can also enlarge the domain of attraction of the MPC controller without the occurrence of the terminal constraints; thus, the stabilizing weighting factor of a given initial state can be included, which has been proved in~\cite{limon2006stability} that the asymptotic stability of the nonlinear MPC controller can be achieved.
\end{remark}

By introducing an auxiliary variable $z_{f,i} \in \mathbb R^{2T(m+n)}$, we have
\begin{IEEEeqnarray*}{rCl}
\label{eq:problem_simp2}
\min\limits_{z_i} &\quad& \sum_{i\in \mathcal V} J_i(z_i)  + \delta_{\mathbb R_+^{2T(m+n)}}(z_{f,i}) \\ 
\operatorname{s.t.} &&  \mathcal A_iz_i + \mathcal B_iz_{f,i} - h_i = 0 \\
&& {\mathcal{M}\left(z_i; [p_j] \right) \mathcal{P}\left(z_i; [p_j] \right)} \leq d_{\mathrm{cmu}}^2  \\ 
&& {\mathcal{M}\left(z_i; [p_j] \right) \mathcal{P}\left(z_i; [p_j] \right)} \geq d_{\mathrm{safe}}^2 \\
&& \forall j\in \nu(i),  \yesnumber 
\end{IEEEeqnarray*}
where $\mathcal A_i = \begin{bmatrix} G_i \\ F_{z,i}  \end{bmatrix}$, $\mathcal B_i = \begin{bmatrix} 0_{(Tn,2T(m+n))} \\ I_{2T(m+n)}  \end{bmatrix}$, and $h_i = \begin{bmatrix} g_i \\ f_{z,i} \end{bmatrix}$, $\delta_{\mathbb R_+^{2T(m+n)}}(z_{f,i})$ is an indicator function defined as
\begin{IEEEeqnarray*}{rCl}
	\delta_{\mathbb R_+^{2T(m+n)}}(z_{f,i}) = 
	\begin{cases}
		0, & \text{if } z_{f,i} \in \mathbb R_+^{2T(m+n)} \\
		\infty, & \text{otherwise}.
	\end{cases}\IEEEyesnumber
\end{IEEEeqnarray*}
\noindent Since the second inequality in~\eqref{eq:problem_simp2} is nonlinear and nonconvex, we define a set 
\begin{IEEEeqnarray*}{rl}
\mathcal Z_i = \left\{  z_i\in \mathbb R^{T(m+n)} \middle | 
\begin{array}{l}
\mathcal M(z_i) \mathcal P(z_i) \leq d_{\mathrm{cmu}}^2  \\
\mathcal M(z_i)\mathcal P(z_i) \geq d_{\mathrm{safe}}^2
\end{array} 
\right\}. \IEEEeqnarraynumspace \IEEEyesnumber
\end{IEEEeqnarray*}
Then, the problem~\eqref{eq:problem_simp2} can be simplified as 
\begin{IEEEeqnarray*}{rCl}
\label{eq:problem_simp4}
\min\limits_{z_i} &\quad&  J_i(z_i) + \delta_{\mathcal Z_i}(z_i) + \delta_{\mathbb R_+^{2T(m+n)}}(z_{f,i})  \\ 
\operatorname{s.t.} &&  \mathcal A_iz_i + \mathcal B_iz_{f,i} - h_i = 0 ,  \yesnumber
\end{IEEEeqnarray*}
where $\delta_{\mathcal Z_i}(z_i)$ is an indicator function of $z_i$ onto the set $\mathcal Z_i$. 

\subsection{Optimization Problem Transformation}
Due to the fact that the direct use of the conventional ADMM to the nonconvex optimization problem~\eqref{eq:problem_simp4} directly cannot ensure the convergence, problem transformation is significant in the constrained nonconvex optimization problem~\cite{jiang2019structured}. Thus, a slack variable $s_i\in \mathbb R^{2Tm+3Tn}$ is introduced to the problem~\eqref{eq:problem_simp4}, and then the transformed optimization problem is given by
\begin{IEEEeqnarray*}{rCl}
\label{eq:problem_relax}
\min\limits_{z_i} &\quad & J_i(z_i) + \delta_{\mathcal Z_i}(z_i) + \delta_{\mathbb R_+^{2T(m+n)}}(z_{f,i})  \\ 
\operatorname{s.t.} &&  \mathcal A_iz_i + \mathcal B_iz_{f,i} - h_i + s_i = 0  \\
&& s_i = 0. \yesnumber
\end{IEEEeqnarray*}
After introducing $s_i$, the linear coupling constraints in~\eqref{eq:problem_simp4} change from 2 blocks ($z_i$ and $z_{f,i}$) to 3 blocks ($z_i$, $z_{f,i}$ and $s_i$). Since the block about $s_i$ is an identity matrix $I_{2Tm+3Tn}$ whose image is the whole space, there always exists an $s_i$ such that $\mathcal A_iz_i + \mathcal B_iz_{f,i} - h_i + s_i = 0$ is satisfied, given any $z_i$ and $z_{f,i}$. In addition, the constraint $s_i=0$ can be treated separately from the equality constraint $\mathcal A_iz_i + \mathcal B_iz_{f,i} - h_i + s_i = 0$. Therefore, we can consider the constraints in the problem~\eqref{eq:problem_relax} by separating them into two groups of constraints. The first group considers the equality constraint and the nonconvex constraint involved by the indicator function $\delta_{\mathcal Z_i}(z_i)$ and $\delta_{\mathbb R_+^{2T(m+n)}}(z_{f,i})$, and the other group deals with the constraint $s_i=0$. As for the first sub-problem which does not include the constraint $s_i=0$, some existing techniques of the ADMM can be employed. On the other hand, the constraint $s_i=0$ can be transformed by dualizing the constraint $s_i=0$ with $\lambda_i$ and adding a quadratic penalty $\frac{\beta_i}{2} \|s_i\|^2$. Then the problem~\eqref{eq:problem_relax} can be rewritten as 
\begin{IEEEeqnarray*}{rCl}
\label{eq:problem_relax2}
\min\limits_{z_i, z_{f,i}, s_i} &\quad & J_i(z_i) + \delta_{\mathcal Z_i}(z_i) + \delta_{\mathbb R_+^{2T(m+n)}}(z_{f,i})  \\
&& + \langle \lambda_i, s_i \rangle + \frac{\beta_i}{2} \|s_i\|^2  \\ 
\operatorname{s.t.} &&  \mathcal A_iz_i + \mathcal B_iz_{f,i} - h_i + s_i = 0 , \yesnumber
\end{IEEEeqnarray*}
where $\lambda_i\in\mathbb R^{2Tm+3Tn}$ is the dual parameter and $\beta_i\in \mathbb R_+$ is the penalty parameter.
Then, the ALM is used to solve the second sub-problem. 

The augmented Lagrangian function of problem~\eqref{eq:problem_relax2} is 
\begin{IEEEeqnarray*}{rCl}
\mathcal L_{\rho_i} &=& J_i(z_i) + \delta_{\mathcal Z_i}(z_i) + \delta_{\mathbb R_+^{2T(m+n)}}(z_{f,i}) + \langle \lambda_i, s_i \rangle \\
&& + \frac{\beta_i}{2} \|s_i\|^2 + \langle y_i, \mathcal A_iz_i + \mathcal B_iz_{f,i} - h_i + s_i\rangle \\
&& + \frac{\rho_i}{2}\| \mathcal A_iz_i + \mathcal B_iz_{f,i} - h_i + s_i \|^2, \yesnumber
\end{IEEEeqnarray*}
\noindent where $y_i\in \mathbb R^{2Tm+3Tn}$ is the dual parameter of constraint $\mathcal A_iz_i + \mathcal B_iz_{f,i} - h_i + s_i=0$ and $\rho_i\in \mathbb R$ is the penalty parameter. Then, the first-order optimality conditions at a stationary solution $(z_i^k, z_{f,i}^k, s_i^k, y_i^k)$ are given by
\begin{IEEEeqnarray*}{lCl}
\label{eq:first_order_optimality_condition}
0 \in \nabla J_i(z_i^k) + \mathbb N_{\mathcal Z_i}(z_i^k) + \mathcal A_i^\top y_i^k \IEEEyesnumber \IEEEyessubnumber \label{subeq:optimality_condition_z_i} \\
0 \in \mathcal B_i^\top y_i^k + \mathbb N_{\mathbb R_+^{2T(m+n)}} \left(z_{f,i}^k\right) \nonumber \IEEEyessubnumber \label{subeq:optimality_condition_z_fi}\\
\lambda_i^k + \beta^k s_i^k + y_i^k = 0 \nonumber \IEEEyessubnumber \label{subeq:optimality_condition_s_i}\\
\mathcal A_iz_i^k + \mathcal B_iz_{f,i}^k - h_i + s_i^k = 0  \label{subeq:optimality_condition_y_i}, \nonumber \IEEEyessubnumber
\end{IEEEeqnarray*}
where $\mathbb N_{\mathcal Z_i}(z_i^k)$ and $\mathbb N_{\mathbb R_+^{2T(m+n)}} \left(z_{f,i}^k\right)$ denote the normal cone of the set $\mathcal Z_i$ and $\mathbb R_+^{2T(m+n)}$ with respect to $z_i^k$ and $z_{f,i}^k$, respectively, and the superscript $(\cdot)^k$ means the variable in the $k$th iteration. 
\begin{remark}
\label{remark1}
	A solution that satisfies the optimality conditions~\eqref{eq:first_order_optimality_condition} may not necessarily satisfy the primal feasibility of the constraint $s_i=0$. However, ALM can prompt the slack variable $s_i$ to zero through updating its dual parameter $\lambda_i$.
\end{remark}
Based on Remark~\ref{remark1}, the update of $\lambda_i$ helps to prompt $s_i$ to zero with
$\lambda_i^{k+1} = \lambda_i^k + \beta_i^k s_i^k$.

\section{Hierarchical ADMM}
\label{section:two_level_ADMM}
For the problem~\eqref{eq:problem_relax2}, we treat it into two groups. The first one is to solve the problem~\eqref{eq:problem_relax2} and update $z_i, z_{f,i}, s_i, y_i$ in sequence with keeping $\lambda_i$ and $\beta_i$ constant, and the other group is to update the parameter $\lambda_i$ and $\beta_i$. In order to clarify the two groups, the iteration number of the first group (inner iteration) is indexed by $r$, and the one of the second group (outer iteration) is $k$.

\noindent\textbf{1. Update $z_i$:}
\begin{IEEEeqnarray*}{rCl}
\label{eq:update_z_i}
z_i^{k,r+1} &=& \underset{z_i}{\operatorname{argmin}} \; \mathcal L_{\rho_i^k} \left(z_i, z_{f,i}^{k,r}, s_i^{k,r}; y_i^{k,r}, \lambda_i^k, \rho_i^k, \beta_i^k \right) \\
&=& \underset{z_i\in \mathcal Z_i}{\operatorname{argmin}} \;  J_i(z_i) + \frac{\rho_i^k}{2} \left\| \mathcal A_iz_i + \mathcal B_iz_{f,i}^{k,r} \right. \\
&& \left. \quad \quad + s_i^{k,r} - h_i+\frac{y_i^{k,r}}{\rho_i^{k}} \right\|^2. \yesnumber 
\end{IEEEeqnarray*}

\noindent\textbf{2. Update $z_{f,i}$:}
\begin{IEEEeqnarray*}{rCl}
\label{eq:update_z_f_i_ori}
z_{f,i}^{k,r+1} &=& \underset{z_{f,i}}{\operatorname{argmin}} \;\mathcal L_{\rho_i^k} \left(z_i^{k,r+1}, z_{f,i}, s_i^{k,r}; y_i^{k,r}, \lambda_i^k, \rho_i^k, \beta_i^k \right) \\
&=& \underset{z_{f,i}}{\operatorname{argmin}} \; \delta_{\mathbb R_+^{2T(m+n)}}(z_{f,i}) + \frac{\rho_i^k}{2} \left\| \mathcal A_iz_i^{k,r+1} \right.  \\
&& \left. \quad + \mathcal B_iz_{f,i} + s_i^{k,r} - h_i+\frac{y_i^{k,r}}{\rho_i^{k}} \right\|^2. \yesnumber
\end{IEEEeqnarray*}
In order to obtain a close-form solution to the problem~\eqref{eq:update_z_f_i_ori}, we define $\mathcal T_i^k\in \mathbb S^{2T(m+n)}$ as a given positive semidefinite linear operator. Then, it follows that
\begin{IEEEeqnarray*}{rCl}
z_{f,i}^{k,r+1} &=& \arg\min\limits_{z_{f,i}} \delta_{\mathbb R_+^{2T(m+n)}}(z_{f,i}) + \frac{1}{2} \left\|z_{f,i} - z_{f,i}^{k,r}\right\|^2_{\mathcal T_i^k}  \\
&& + \frac{\rho_i^k}{2} \left\| \mathcal A_iz_i^{k,r+1} + \mathcal B_iz_{f,i} + s_i^{k,r} - h_i+\frac{y_i^{k,r}}{\rho_i^{k}} \right\|^2. \IEEEeqnarraynumspace \yesnumber
\end{IEEEeqnarray*}
Then, the optimality condition to the sub-problem of $z_{f,i}$ updating is given by
\begin{IEEEeqnarray*}{rCl}
0 &\in & \partial\delta_{\mathbb R_+^{2T(m+n)}}(z_{f,i}) + \rho^k\mathcal B_i^T\mathcal B_iz_{f,i} + \mathcal T_{i}^{k}\left(z_{f,i} - z_{f,i}^{k,r}\right) \\
&&+ \rho^k\mathcal B_i^T\left( \mathcal A_iz_i^{k,r+1} + s_i^{k,r} - h_i+\frac{y_i^{k,r}}{\rho_i^{k}} \right).  \yesnumber
\end{IEEEeqnarray*}
Setting $\mathcal T_{i}^{k}=\alpha_i^{k} I_{2T(m+n)} - \rho_i^k\mathcal B_i^T\mathcal B_i$ where $\alpha_i^{k}$ is the largest eigenvalue of the matrix $\rho_i^k\mathcal B_i^T\mathcal B_i$, we have
\begin{IEEEeqnarray*}{rCl}
0&\in& \left(\partial\delta_{\mathbb R_+^{2T(m+n)}} + \alpha_i^{k} I_{2T(m+n)} \right)z_{f,i}  - \left(\alpha_i^{k}I - \rho_i^k\mathcal B_i^T\mathcal B_i\right)z_{f,i}^{k,r} \\
&& + \rho^k\mathcal B_i^T\left( \mathcal A_iz_i^{k,r+1} + s_i^{k,r} - h_i+\frac{y_i^{k,r}}{\rho_i^{k}} \right).   \yesnumber
\end{IEEEeqnarray*}
Then, it follows that
\begin{IEEEeqnarray*}{lll}
z_{f,i}^{k,r+1} 
&=& \left({\alpha_i^{k}}^{-1}\partial\delta_{\mathbb R_+^{2T(m+n)}} + I_{2T(m+n)} \right)^{-1} \\
&&  \Bigg( \left(I_{2T(m+n)} - {\alpha_i^{k}}^{-1}\rho_i^k\mathcal B_i^T\mathcal B_i\right)z_{f,i}^{k,r} \\
&&  \; - {\alpha_i^{k}}^{-1}\rho^k\mathcal B_i^T\left( \mathcal A_iz_i^{k,r+1} + s_i^{k,r} - h_i+\frac{y_i^{k,r}}{\rho_i^{k}} \right) \Bigg). \\ \yesnumber
\end{IEEEeqnarray*}

\begin{lemma}
\label{lemma:projection}
The projection operator $\operatorname{Proj}_{\mathcal C}(\cdot)$ with respect to the convex set $\mathcal C$ can be expressed as 
\begin{IEEEeqnarray}{rCl}
	\operatorname{Proj}_{\mathcal C}(\cdot) = (I+\alpha\partial\delta_{\mathcal C})^{-1},
\end{IEEEeqnarray}
where $\partial$ is the sub-differential operator, and $\alpha \in \mathbb R$ is any arbitrary number.	
\end{lemma}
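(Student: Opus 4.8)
The plan is to recognize the right-hand side as the \emph{proximal mapping} of the indicator function $\delta_{\mathcal C}$ and then invoke the standard resolvent characterization of proximal mappings. Throughout, $\mathcal C$ is a nonempty closed convex set and $\alpha$ is taken to be a positive scalar, so that $\delta_{\mathcal C}$ is a proper, closed, convex function, $\partial\delta_{\mathcal C} = \mathbb N_{\mathcal C}$ is its normal-cone subdifferential, and $\alpha\partial\delta_{\mathcal C}$ is maximal monotone.

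First I would recall the definition of the proximal mapping: for a proper closed convex $f$, $\operatorname{prox}_{\alpha f}(v) := \arg\min_{x}\{ f(x) + \tfrac{1}{2\alpha}\|x-v\|^2\}$. Since the objective is strongly convex, the minimizer exists and is unique, so $\operatorname{prox}_{\alpha f}$ is a well-defined single-valued operator. Writing the first-order optimality condition for this strongly convex subproblem gives $0 \in \partial f(x) + \tfrac{1}{\alpha}(x-v)$, i.e. $v \in (I + \alpha\partial f)(x)$; because the minimizer is unique, this relation inverts to the single-valued map $x = (I + \alpha\partial f)^{-1}(v)$, establishing the resolvent identity $\operatorname{prox}_{\alpha f} = (I + \alpha\partial f)^{-1}$ for every $\alpha>0$.

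Next I would specialize to $f = \delta_{\mathcal C}$. Then $\delta_{\mathcal C}(x) + \tfrac{1}{2\alpha}\|x-v\|^2$ equals $\tfrac{1}{2\alpha}\|x-v\|^2$ on $\mathcal C$ and $+\infty$ off $\mathcal C$, so its unique minimizer is exactly the Euclidean projection $\operatorname{Proj}_{\mathcal C}(v) = \arg\min_{x\in\mathcal C}\|x-v\|^2$, which is independent of $\alpha$. Combining this with the resolvent identity of the previous step and using $\partial\delta_{\mathcal C} = \mathbb N_{\mathcal C}$ yields $\operatorname{Proj}_{\mathcal C}(\cdot) = \operatorname{prox}_{\alpha\delta_{\mathcal C}}(\cdot) = (I + \alpha\partial\delta_{\mathcal C})^{-1}$ for any $\alpha>0$, as claimed.

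The only delicate point is ensuring that $(I + \alpha\partial\delta_{\mathcal C})^{-1}$ is genuinely a single-valued, everywhere-defined operator rather than a set-valued relation; this is precisely where closedness and convexity of $\mathcal C$ enter, through maximal monotonicity of $\partial\delta_{\mathcal C}$ (Minty's theorem) — or, equivalently, through existence and uniqueness of the minimizer in the strongly convex proximal subproblem, as used above. I would also note in passing that the hypothesis should read ``$\alpha$ an arbitrary \emph{positive} number,'' since the resolvent degenerates at $\alpha=0$ and monotonicity fails for $\alpha<0$; the substantive content of the lemma is exactly this $\alpha$-independence of the projection, which is what is later exploited when $\alpha$ is chosen as ${\alpha_i^{k}}^{-1}$ in the $z_{f,i}$ update.
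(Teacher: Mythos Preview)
Your proof is correct and follows essentially the same approach as the paper: both arguments identify $(I+\alpha\partial\delta_{\mathcal C})^{-1}$ with the proximal map of $\delta_{\mathcal C}$ by showing that the resolvent inclusion $v \in x + \alpha\partial\delta_{\mathcal C}(x)$ coincides with the first-order optimality condition of the strongly convex problem $\min_x\{\delta_{\mathcal C}(x) + \tfrac{1}{2\alpha}\|x-v\|^2\}$, whose unique minimizer is $\operatorname{Proj}_{\mathcal C}(v)$. Your remark that the hypothesis should read $\alpha>0$ is well taken and is implicitly assumed in the paper's own argument as well.
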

\begin{proof}
	Define a finite dimensional Euclidean space $\mathcal X$ equipped with an inner product and its induced norm such that $\mathcal C \subset \mathcal X$. For any $x\in \mathcal X$, there exists $z\in\mathcal X$ such that $z\in (I+\alpha\partial\delta_{\mathcal C})^{-1}(x)$. Then, it follows that 
	\begin{IEEEeqnarray*}{rCl}
		x &\in& (I+\alpha\partial\delta_{\mathcal C})(z) 
		= z+\alpha\partial\delta_{\mathcal C}. \yesnumber
	\end{IEEEeqnarray*}
	The projection operator $\operatorname{Proj}_{\mathcal C}(z)$ is 
	\begin{IEEEeqnarray}{rCl}
	\label{eq:projection_lemma}
		\operatorname{Proj}_{\mathcal C}(z) &=& \underset{z}{\operatorname{argmin}}\left\{ \delta_{\mathcal C}(x) +\frac{1}{2\alpha} \|z-x\|^2 \right\}.
	\end{IEEEeqnarray}
	Due to the fact that the optimization problem~\eqref{eq:projection_lemma} is strictly convex, the sufficient and necessary condition of~\eqref{eq:projection_lemma} is 
	\begin{IEEEeqnarray}{rCl}
	\label{eq:projection_lemma2}
	0 \in \alpha\partial\delta_{\mathcal C}(x) + z - x.
	\end{IEEEeqnarray}
	Since~\eqref{eq:projection_lemma2} is equivalent to~\eqref{eq:projection_lemma} and the projection onto a convex set is unique, the operator $(I+\alpha\partial\delta_{\mathcal C})^{-1}$ is a point-to-point mapping. 
	This completes the proof of Lemma~\ref{lemma:projection}.
\end{proof}
Based on Lemma~\ref{lemma:projection}, $z_{f,i}$ can be determined as
\begin{IEEEeqnarray*}{rCl}
\label{eq:update_z_f_i}
z_{f,i}^{k,r+1}
&= \operatorname{Proj}_{\mathbb R_+^{2T(m+n)}} \Bigg\{  \left(I_{2T(m+n)} - {\alpha_i^{k}}^{-1}\rho_i^k\mathcal B_i^T\mathcal B_i\right)z_{f,i}^{k,r} \\
& - {\alpha_i^{k}}^{-1}\rho^k\mathcal B_i^T\left( \mathcal A_iz_i^{k,r+1} + s_i^{k,r} - h_i+\frac{y_i^{k,r}}{\rho_i^{k}} \right) \Bigg\}, \IEEEeqnarraynumspace \yesnumber
\end{IEEEeqnarray*}
where the projection operator $\operatorname{Proj}_{\mathbb R_+^{2T(m+n)}} (z)$ means the projection of $z$ onto a closed convex set $\mathbb R_+^{2T(m+n)}$.

\begin{lemma}
\label{lemma:project_reals}
With $\mathcal C=\mathbb R_+^n$, then for any $z\in \mathbb R^n$, the projection of $z$ onto $\mathcal C$ is given by
\begin{IEEEeqnarray}{rCl}
\operatorname{Proj}_{\mathcal C}(z) = \max\{z,0\}.
\end{IEEEeqnarray}
\end{lemma}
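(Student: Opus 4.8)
The plan is to reduce the $n$-dimensional projection to $n$ independent scalar projections and solve each explicitly. Since $\mathcal C = \mathbb R_+^n = \mathbb R_+ \times \mathbb R_+ \times \cdots \times \mathbb R_+$ is a Cartesian product and the squared Euclidean norm separates as $\|z-x\|^2 = \sum_{j=1}^n (z_j-x_j)^2$, the defining problem $\operatorname{Proj}_{\mathcal C}(z) = \operatorname{argmin}_{x\in\mathcal C}\frac{1}{2}\|z-x\|^2$ decouples into the scalar problems $\min_{x_j\ge 0}\frac{1}{2}(z_j-x_j)^2$ for $j\in\mathbb Z^n$. Hence it suffices to identify the minimizer of each scalar problem and then stack the components.

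First I would dispatch the scalar problem. If $z_j\ge 0$, the unconstrained minimizer $x_j=z_j$ is feasible and attains the value zero, so it is optimal; if $z_j<0$, then for every $x_j\ge 0$ the quantity $(z_j-x_j)^2=(x_j-z_j)^2$ is strictly increasing on $[0,\infty)$ because $x_j-z_j>0$, so the minimum over $\mathbb R_+$ is attained at the boundary point $x_j=0$. In both cases the minimizer equals $\max\{z_j,0\}$, and by the separability above $\operatorname{Proj}_{\mathcal C}(z) = \big(\max\{z_1,0\},\ldots,\max\{z_n,0\}\big) = \max\{z,0\}$, where the maximum is understood element-wise, consistent with the conventions fixed in Section~\ref{section:problem_formulation}.

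As an alternative that ties the statement directly to Lemma~\ref{lemma:projection}, one may instead verify the normal-cone characterization: by Lemma~\ref{lemma:projection}, $x^\star=\operatorname{Proj}_{\mathcal C}(z)$ is equivalent to $z-x^\star\in\mathbb N_{\mathcal C}(x^\star)$, and for $\mathcal C=\mathbb R_+^n$ the normal cone at a feasible point $x^\star\ge 0$ is $\mathbb N_{\mathcal C}(x^\star)=\{v\le 0:\langle v,x^\star\rangle=0\}$. Taking $x^\star=\max\{z,0\}$ one checks componentwise that $x^\star\ge 0$, that $z-x^\star=\min\{z,0\}\le 0$, and that $(z_j-x_j^\star)x_j^\star=0$ for every $j$ (either $z_j\ge 0$ and $z_j-x_j^\star=0$, or $z_j<0$ and $x_j^\star=0$), so $z-x^\star\in\mathbb N_{\mathcal C}(x^\star)$; uniqueness of the projection onto a closed convex set then gives $x^\star=\operatorname{Proj}_{\mathcal C}(z)$.

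I do not anticipate a genuine obstacle here, as the result is elementary. The only point deserving care is the passage from the $n$-dimensional projection to $n$ scalar ones, which rests on the product structure of $\mathbb R_+^n$ together with the separability of the squared norm; it is worth stating explicitly but is entirely routine, and everything else collapses to a one-line, one-variable minimization.
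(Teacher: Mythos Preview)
Your argument is correct; both the direct separability reduction and the normal-cone verification are valid and complete. The paper itself states this lemma without proof, treating it as a standard fact, so there is no proof in the paper to compare against; your write-up simply supplies what the authors omitted.
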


According to Lemma~\ref{lemma:project_reals}, $z_{f,i}^{k,r+1}$ can be easily obtained.

\noindent\textbf{3. Update $s_{i}$:}
\begin{IEEEeqnarray*}{rCl}
\label{eq:update_s_i}
s_{i}^{k,r+1} &=& \underset{s_{i}}{\operatorname{argmin}} \; \mathcal L_{\rho_i^k} \left(z_i^{k,r+1}, z_{f,i}^{k,r+1}, s_i; y_i^{k,r}, \lambda_i^k, \rho_i^k, \beta_i^k \right) \\
&=& -\frac{\rho_i^k}{\rho_i^k+\beta_i^k}\left( \mathcal A_iz_i^{k,r+1} + \mathcal B_iz_{f,i}^{k,r+1}- h_i+\frac{y_i^{k,r}}{\rho_i^{k}} \right) \\
 && \; - \frac{\lambda_i^k}{\rho_i^k+\beta_i^k}. \yesnumber
\end{IEEEeqnarray*}

\noindent\textbf{4. Update $y_{i}$:}
\begin{IEEEeqnarray*}{rCl}
\label{eq:update_y_i}
y_i^{k,r+1} &=& \underset{y_{i}}{\operatorname{argmin}} \; \mathcal L_{\rho_i^k} \left(z_i^{k,r+1}, z_{f,i}^{k,r+1}, s_i^{k,r+1}; y_i, \lambda_i^k, \rho_i^k, \beta_i^k \right) \\
&=& y_i^{k,r} + \rho_i^k \left( \mathcal A_iz_i^{k,r+1} + \mathcal B_iz_{f,i}^{k,r+1} + s_i^{k,r+1} - h_i \right). \\  \yesnumber
\end{IEEEeqnarray*}

\begin{assumption}
	\label{assumption:z-update_descent}
	Given $\lambda_i^k$, $\beta_i^k$ and $\rho_i^k$, the $z_i$-update can find a stationary solution $z_i^r$ such that 
	\begin{IEEEeqnarray*}{rCl}
	0 &\in & \partial_{z_i} \mathcal L_{\rho_i^k}(z_i^r, z_{f,i}^{r-1}, s_i^{r-1},y_i^{r-1}) \\
	\mathcal L_{\rho_i^k}(z_i^r, z_{f,i}^{r-1}, s_i^{r-1},y_i^{r-1}) &\leq & \mathcal L_{\rho_i^k}(z_i^{r-1}, z_{f,i}^{r-1}, s_i^{r-1},y_i^{r-1}), \\ \yesnumber
	\end{IEEEeqnarray*}
\end{assumption}
\noindent for all $a\in \mathbb Z_{+}$.


\begin{lemma}
	\label{lemma2}
	For all $a\in \mathbb Z_{+}$, the following conditions holds:
	\begin{IEEEeqnarray*}{c}
		\left \langle \mathcal{B}_i^\top y_i^{r-1} + \rho_i\mathcal{B}_i^\top \left(\mathcal A_iz_i^{r} + \mathcal B_iz_{f,i}^{r} + s_i^{r-1} - h_i\right), \right. \\
		\left. (z_{f,i}-z_{f,i}^{r}) \right \rangle \geq 0, \; \forall z_{f,i} \in \mathbb R_{+}^{2T(m+n)} \\
		\lambda_i + \beta_is_i^r + y_i^r = 0.  \yesnumber
	\end{IEEEeqnarray*}
\end{lemma}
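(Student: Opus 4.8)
The plan is to derive both conditions directly from the optimality characterizations of the $z_{f,i}$-update and the $s_i$-update, which were computed in closed form earlier in the section. The first assertion is simply the variational inequality expressing that $z_{f,i}^{r}$ minimizes the $z_{f,i}$-subproblem \eqref{eq:update_z_f_i_ori}; the second is the stationarity condition of the (unconstrained, strictly convex quadratic) $s_i$-subproblem \eqref{eq:update_s_i}, i.e., the equation labelled \eqref{subeq:optimality_condition_s_i} read at the current inner iterate.

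First I would handle the $z_{f,i}$ part. By the definition of $z_{f,i}^{r}$ as the argmin in \eqref{eq:update_z_f_i_ori}, and since $\delta_{\mathbb R_+^{2T(m+n)}}$ is the indicator of the closed convex cone $\mathbb R_+^{2T(m+n)}$ while the quadratic penalty term is smooth, the first-order optimality condition reads
\begin{IEEEeqnarray*}{c}
0 \in \partial\delta_{\mathbb R_+^{2T(m+n)}}(z_{f,i}^{r}) + \mathcal B_i^\top y_i^{r-1} + \rho_i\mathcal B_i^\top\left(\mathcal A_iz_i^{r} + \mathcal B_iz_{f,i}^{r} + s_i^{r-1} - h_i\right). \yesnumber
\end{IEEEeqnarray*}
Writing $-v$ for the gradient term, this says $v \in \partial\delta_{\mathbb R_+^{2T(m+n)}}(z_{f,i}^{r}) = \mathbb N_{\mathbb R_+^{2T(m+n)}}(z_{f,i}^{r})$, i.e., $\langle v, z_{f,i} - z_{f,i}^{r}\rangle \le 0$ for all $z_{f,i}\in\mathbb R_+^{2T(m+n)}$. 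Substituting back $v = -\bigl(\mathcal B_i^\top y_i^{r-1} + \rho_i\mathcal B_i^\top(\mathcal A_iz_i^{r} + \mathcal B_iz_{f,i}^{r} + s_i^{r-1} - h_i)\bigr)$ and flipping the sign gives exactly the claimed inequality. One subtlety is that the actual update \eqref{eq:update_z_f_i} uses the semi-proximal term $\tfrac12\|z_{f,i}-z_{f,i}^{r}\|^2_{\mathcal T_i^k}$; but by Lemma~\ref{lemma:projection} and Lemma~\ref{lemma:project_reals} the resulting $z_{f,i}^{r}$ equals the projection $\operatorname{Proj}_{\mathbb R_+^{2T(m+n)}}(\cdot)$, and a projection onto a convex set satisfies the obtuse-angle (variational) inequality, which is precisely the stated relation — so the conclusion holds whether one reads the subproblem with or without the proximal term, as long as one uses the projection characterization.

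For the second assertion, since the $s_i$-subproblem in \eqref{eq:update_s_i} is an unconstrained minimization of $\langle\lambda_i,s_i\rangle + \tfrac{\beta_i}{2}\|s_i\|^2 + \langle y_i^{r-1}, s_i\rangle + \tfrac{\rho_i}{2}\|\cdots + s_i\|^2$ in $s_i$, setting the gradient to zero at $s_i^{r}$ gives $\lambda_i + \beta_i s_i^{r} + y_i^{r-1} + \rho_i(\mathcal A_iz_i^{r} + \mathcal B_iz_{f,i}^{r} + s_i^{r} - h_i) = 0$. Invoking the $y_i$-update \eqref{eq:update_y_i}, the last two terms combine to $y_i^{r} - y_i^{r-1}$ plus $y_i^{r-1}$, hence collapse to $y_i^{r}$, leaving $\lambda_i + \beta_i s_i^{r} + y_i^{r} = 0$, as claimed. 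I do not expect a serious obstacle here; the only thing to be careful about is bookkeeping the inner-iteration superscripts and the order in which $z_i, z_{f,i}, s_i, y_i$ are updated within a sweep (so that e.g. $z_{f,i}^{r}$, not $z_{f,i}^{r-1}$, appears in the $s_i$-stationarity relation), and confirming that Assumption~\ref{assumption:z-update_descent} is not actually needed for this particular lemma — it is the exactness of the $z_{f,i}$- and $s_i$-minimizations, not the $z_i$-step, that is being used.
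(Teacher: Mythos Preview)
Your proposal is correct and follows exactly the route the paper indicates: the paper's proof is omitted and simply says the two conditions come from the optimality conditions of the $z_{f,i}$- and $s_i$-updates, namely \eqref{subeq:optimality_condition_z_fi} and \eqref{subeq:optimality_condition_s_i}, which is precisely what you spell out (normal-cone/variational-inequality characterization for $z_{f,i}^{r}$, and stationarity of the quadratic $s_i$-subproblem combined with the $y_i$-update \eqref{eq:update_y_i}).

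One small caveat on your parenthetical remark about the semi-proximal term: the projection formula \eqref{eq:update_z_f_i} does \emph{not} give the same variational inequality as the unproximal subproblem \eqref{eq:update_z_f_i_ori}; with the $\tfrac12\|z_{f,i}-z_{f,i}^{r-1}\|_{\mathcal T_i^k}^2$ term present, the normal-cone inclusion picks up an additional $\mathcal T_i^k(z_{f,i}^{r}-z_{f,i}^{r-1})$, so the inequality in Lemma~\ref{lemma2} would need that extra term. The paper itself states the lemma for the plain update \eqref{eq:update_z_f_i_ori} and does not address this discrepancy, so your core argument matches theirs; just do not claim the proximal and non-proximal cases yield the identical inequality.
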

\begin{proof}
	This lemma can be straightforwardly derived based on the optimality conditions of $z_i$- and $z_{f,i}$-updates, i.e., \eqref{subeq:optimality_condition_z_fi} and~\eqref{subeq:optimality_condition_s_i}, and thus the proof is omitted.
\end{proof}

\begin{theorem}~\label{thm1}
	With $\rho_i^k \geq \sqrt{2} \beta_i^k$, the inner iteration of problem~\eqref{eq:problem_relax2} converges to the stationary point $(z_i^k, z_{f,i}^k, s_i^k, y_i^k)$ of the transformed problem~\eqref{eq:problem_relax2}.
\end{theorem}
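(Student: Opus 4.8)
The plan is a Lyapunov-type analysis of the augmented Lagrangian $\mathcal{L}_{\rho_i^k}$ along the inner iterates: establish a one-step sufficient decrease, a uniform lower bound, and then pass to the limit in the subproblem optimality conditions to recover the stationarity system~\eqref{eq:first_order_optimality_condition}. The first ingredient is the identity tying the dual block to the slack block. The $s_i$-optimality condition recorded in Lemma~\ref{lemma2}, $\lambda_i+\beta_i^k s_i^r+y_i^r=0$, holds at every inner iteration $r$; since $\lambda_i$ and $\beta_i^k$ are frozen in the inner loop, subtracting consecutive iterates gives $y_i^{r+1}-y_i^r=-\beta_i^k(s_i^{r+1}-s_i^r)$, hence $\|y_i^{r+1}-y_i^r\|=\beta_i^k\|s_i^{r+1}-s_i^r\|$. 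This is exactly what allows the dual ascent to be absorbed by a primal increment, and it is where the identity structure of the third block pays off.

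Next I would decompose $\mathcal{L}_{\rho_i^k}^{r+1}-\mathcal{L}_{\rho_i^k}^{r}$ across the four updates. The $z_i$-update is nonincreasing by Assumption~\ref{assumption:z-update_descent}. Because $\mathcal{B}_i^\top\mathcal{B}_i=I$, the $z_{f,i}$-subproblem objective is $\rho_i^k$-strongly convex, so the update~\eqref{eq:update_z_f_i} lowers $\mathcal{L}_{\rho_i^k}$ by at least $\tfrac{\rho_i^k}{2}\|z_{f,i}^{r+1}-z_{f,i}^r\|^2$. The $s_i$-subproblem is an exact quadratic minimization with Hessian $(\rho_i^k+\beta_i^k)I$, giving a decrease of exactly $\tfrac{\rho_i^k+\beta_i^k}{2}\|s_i^{r+1}-s_i^r\|^2$, while the $y_i$-update raises $\mathcal{L}_{\rho_i^k}$ by $\tfrac{1}{\rho_i^k}\|y_i^{r+1}-y_i^r\|^2=\tfrac{(\beta_i^k)^2}{\rho_i^k}\|s_i^{r+1}-s_i^r\|^2$ by the identity above. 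The hypothesis $\rho_i^k\ge\sqrt2\,\beta_i^k$ is used precisely here: it is equivalent to $\tfrac{(\beta_i^k)^2}{\rho_i^k}\le\tfrac{\rho_i^k}{2}\le\tfrac{\rho_i^k+\beta_i^k}{2}$, so the net of the last two contributions is at most $-\tfrac{\beta_i^k}{2}\|s_i^{r+1}-s_i^r\|^2$, and altogether $\mathcal{L}_{\rho_i^k}^{r+1}\le\mathcal{L}_{\rho_i^k}^{r}-\tfrac{\rho_i^k}{2}\|z_{f,i}^{r+1}-z_{f,i}^r\|^2-\tfrac{\beta_i^k}{2}\|s_i^{r+1}-s_i^r\|^2$.

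For the lower bound I would evaluate $\mathcal{L}_{\rho_i^k}$ at an iterate (both indicator terms vanish because $z_i^r\in\mathcal{Z}_i$ and $z_{f,i}^r\in\mathbb{R}_+^{2T(m+n)}$ by construction), complete the square in the coupling term to get the bound $\ge-\tfrac{1}{2\rho_i^k}\|y_i^r\|^2$, and substitute $y_i^r=-\lambda_i-\beta_i^k s_i^r$. What remains is $J_i(z_i^r)\ge 0$ plus a quadratic in $\|s_i^r\|$ whose leading coefficient $\tfrac{\beta_i^k}{2}\big(1-\tfrac{\beta_i^k}{\rho_i^k}\big)$ is strictly positive since $\rho_i^k>\beta_i^k$; such a quadratic is bounded below, so $\mathcal{L}_{\rho_i^k}^r\ge\underline{\mathcal{L}}$ uniformly in $r$. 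Together with the monotone decrease just derived, $\mathcal{L}_{\rho_i^k}^r$ converges and the telescoped sum of increments is finite, whence $\|z_{f,i}^{r+1}-z_{f,i}^r\|\to0$, $\|s_i^{r+1}-s_i^r\|\to0$, hence $\|y_i^{r+1}-y_i^r\|\to0$ and the primal residual $\mathcal{A}_iz_i^{r}+\mathcal{B}_iz_{f,i}^{r}+s_i^{r}-h_i=(y_i^{r}-y_i^{r-1})/\rho_i^k\to0$. Coercivity of $J_i$ (from positive definiteness of $H_i$), together with this lower bound and the upper bound $\mathcal{L}_{\rho_i^k}^r\le\mathcal{L}_{\rho_i^k}^0$, bounds $\|z_i^r\|$, then $\|s_i^r\|$ and $\|y_i^r\|$, and finally $\|z_{f,i}^r\|$ via the residual and $\mathcal{B}_i^\top\mathcal{B}_i=I$; so the whole inner sequence is bounded.

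Finally I would pass to the limit along a convergent subsequence $(z_i^r,z_{f,i}^r,s_i^r,y_i^r)\to(z_i^k,z_{f,i}^k,s_i^k,y_i^k)$. Rewriting the $z_i$- and $z_{f,i}$-subproblem optimality conditions in terms of $y_i^{r}$ instead of $y_i^{r-1}$ — the discrepancy is $\rho_i^k\mathcal{B}_i^\top(z_{f,i}^{r-1}-z_{f,i}^{r})+\rho_i^k(s_i^{r-1}-s_i^{r})$, which vanishes along the subsequence — and using continuity of $\nabla J_i$ together with outer semicontinuity of the normal-cone maps $\mathbb{N}_{\mathcal{Z}_i}$ and $\mathbb{N}_{\mathbb{R}_+^{2T(m+n)}}$, the limits of these inclusions are precisely the first two relations of~\eqref{eq:first_order_optimality_condition}; the remaining two already hold at the limit by Lemma~\ref{lemma2} and the vanishing residual. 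Hence the inner loop converges to a point satisfying~\eqref{eq:first_order_optimality_condition}, i.e., a stationary point of~\eqref{eq:problem_relax2}. I expect the main obstacle to be the lower-boundedness of $\mathcal{L}_{\rho_i^k}$: for a generic nonconvex ADMM the augmented Lagrangian need not be bounded below, and here it is only because the $s_i$-block is the identity, which forces $y_i^r=-\lambda_i-\beta_i^k s_i^r$ and thereby turns the destabilizing term $-\tfrac{1}{2\rho_i^k}\|y_i^r\|^2$ into a quadratic in $\|s_i^r\|$ with positive leading coefficient — and that positivity, jointly with the positivity of the net per-step decrease, is exactly what the condition $\rho_i^k\ge\sqrt2\,\beta_i^k$ (respectively $\rho_i^k>\beta_i^k$ for the lower bound alone) secures. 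The nonconvexity of the $z_i$-update over $\mathcal{Z}_i$ is not an obstacle here because it is handled by Assumption~\ref{assumption:z-update_descent}.
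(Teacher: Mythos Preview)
Your proposal is correct and follows essentially the same Lyapunov-descent approach as the paper: descent of $\mathcal{L}_{\rho_i^k}$ across the four inner updates via Assumption~\ref{assumption:z-update_descent} and Lemma~\ref{lemma2} (the identity $y_i^{r}-y_i^{r-1}=-\beta_i^k(s_i^{r}-s_i^{r-1})$ being the key step that ties the dual increase to the $s_i$-increment), combined with lower boundedness of $\mathcal{L}_{\rho_i^k}$ to conclude. You are more explicit than the paper in several places --- you obtain the sharper combined $s_i/y_i$ descent constant $\tfrac{\rho_i^k+\beta_i^k}{2}-\tfrac{(\beta_i^k)^2}{\rho_i^k}$ instead of the paper's $\tfrac{\rho_i^k}{2}-\tfrac{(\beta_i^k)^2}{\rho_i^k}$, you give a concrete completion-of-squares lower bound where the paper simply cites~\cite{sun2019two}, and you spell out the boundedness and limit passage that the paper only asserts --- but the architecture is identical.
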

\begin{proof}
	With Assumption~\ref{assumption:z-update_descent}, we have 
\begin{IEEEeqnarray*}{rCl}
\mathcal{L}_{\rho_i^k} \left(z_i^{r-1}, z_{f,i}^{r-1}, s_i^{r-1}, y_i^{r-1} \right) \geq \mathcal{L}_{\rho_i^k} \left(z_i^{r}, z_{f,i}^{r-1}, s_i^{r-1}, y_i^{r-1} \right). \\ \yesnumber
\end{IEEEeqnarray*}
Besides, we also have 
\begin{IEEEeqnarray*}{rCl}
\label{eq:theorem_subeq}
	&&\mathcal{L}_{\rho_i^k} \left(z_i^{r}, z_{f,i}^{r-1}, s_i^{r-1}, y_i^{r-1} \right) - \mathcal{L}_{\rho_i^k} \left(z_i^{r}, z_{f,i}^{r}, s_i^{r-1}, y_i^{r-1} \right) \\
	&=& \Big\langle \mathcal{B}_i^\top y_i^{r-1} + \rho_i^k\mathcal{B}_i^\top \left(\mathcal A_iz_i^{r} + \mathcal B_iz_{f,i}^{r} + s_i^{r-1} - h_i\right),  \\
	&&   \quad z_{f,i}^{r-1}-z_{f,i}^{r} \Big\rangle + \frac{\rho_i^k}{2} \left\|\mathcal{B}_i(z_{f,i}^{r-1}-z_{f,i}^{r})\right\|^2 \\
	&\geq & \frac{\rho_i^k}{2} \left\|\mathcal{B}_i(z_{f,i}^{r-1}-z_{f,i}^{r})\right\|^2. \yesnumber
\end{IEEEeqnarray*}
Note that the equality in~\eqref{eq:theorem_subeq} holds because of the following claim:
\begin{IEEEeqnarray*}{l}
 \left\| \mathcal A_iz_i^{r} + s_i^{r-1} - h_i + \mathcal B_iz_{f,i}^{r-1}\right\|^2 - \left\| \mathcal A_iz_i^{r} + s_i^{r-1} - h_i + \mathcal B_iz_{f,i}^{r} \right\|^2 \\
 = 2 \left\langle \mathcal A_iz_i^{r} + s_i^{r-1} - h_i + \mathcal B_iz_{f,i}^{r}, \mathcal B_iz_{f,i}^{r-1}-\mathcal B_iz_{f,i}^{r} \right\rangle \\
  \quad + \left\| \mathcal B_iz_{f,i}^{r-1}-\mathcal B_iz_{f,i}^{r} \right\|^2. \yesnumber
\end{IEEEeqnarray*}
In addition, the inequality in~\eqref{eq:theorem_subeq} can be obtained from Lemma~\ref{lemma2}. Now, the descent of $z_i$ and $z_{f,i}$ updates has been proved. Next, we will show the descent of $s_i$ and $y_i$ updates.
\begin{IEEEeqnarray*}{rCl}
\label{eq:theorem_subeq2}
	&&\mathcal{L}_{\rho_i^k} \left(z_i^{r}, z_{f,i}^{r}, s_i^{r-1}, y_i^{r-1} \right) - \mathcal{L}_{\rho_i^k} \left(z_i^{r}, z_{f,i}^{r}, s_i^{r}, y_i^{r} \right) \\
	&=& \left\langle \lambda_i^{k}, (s_{i}^{r-1}-s_{i}^{r}) \right\rangle + \frac{\beta_i^k}{2} \left(\left\|s_i^{r-1}\|^2-\|s_i^{r}\right\|^2\right) \\
	&&  \: + \left\langle y_i^{r}, s_{i}^{r-1}-s_{i}^{r} \right\rangle + \frac{\rho_i^k}{2} \left\| s_{i}^{r-1}-s_{i}^{r}\right\|^2 \\
	&&  \: - \rho_i^k \left\|\mathcal A_iz_i^{r} + \mathcal B_iz_{f,i}^{r} + s_i^{r} - h_i\right\|^2 \\
	&\geq & \left(\frac{\rho_i^k}{2}-\frac{{\beta_i^k}^2}{\rho_i^k}\right) \left\|s_{i}^{r-1}-s_{i}^{r}\right\|^2. \yesnumber
\end{IEEEeqnarray*}
Remarkably, the first equality holds due to~\eqref{eq:update_y_i} and the following derivation:
\begin{IEEEeqnarray*}{lCl}
	\label{eq:prove1}
	&&  -\rho_i^k \langle \mathcal A_iz_i^{r} + \mathcal B_iz_{f,i}^{r} - h_i + s_i^{r}, \mathcal A_iz_i^{r} + \mathcal B_iz_{f,i}^{r} - h_i + s_i^{r-1} \rangle \\
	&&  + \frac{\rho_i^k}{2} \Big( \left\|\mathcal A_iz_i^{r} + \mathcal B_iz_{f,i}^{r} - h_i + s_i^{r-1}\right\|^2 \\
	&&  \quad - \left\|\mathcal A_iz_i^{r} + \mathcal B_iz_{f,i}^{r} - h_i + s_i^{r}\right\|^2 \Big) \\
	&=& \frac{\rho_i^k}{2} \left\| s_i^{r-1} - s_i^{r} \right\|^2 - \rho_i^k \left\|\mathcal A_iz_i^{r} + \mathcal B_iz_{f,i}^{r} - h_i + s_i^{r} \right\|^2. \yesnumber
\end{IEEEeqnarray*}
To prove the inequality in~\eqref{eq:theorem_subeq2}, we define a function $f(s_i) = \langle \lambda_i^{r-1}, s_i \rangle + \frac{\beta_i}{2} \|s_i\|^2$. The gradient of $f(s_i)$ is $\nabla f(s_i^{r}) = \lambda_i^{r-1} + \beta_i s_i^{r} = -y_i^r$ due to~\eqref{subeq:optimality_condition_s_i}. It follows that $f(s_i^{r-1})-f(s_i^{r})+(y_i^r)^\top (s_i^{r-1} - s_i^r) \geq 0$, as $f(s_i)$ is convex. Therefore, the inequality in~\eqref{eq:theorem_subeq2} holds. 

Under the condition $\rho_i\geq \sqrt{2}\beta_i$, the descent of $\mathcal L_{\rho_i^k}$ with updating $(z_i^k, z_{f,i}^k, s_i^k, y_i^k)$ can be proved, which means the augmented Lagrangian function is sufficient descent. 

Since the function $f(s_i)$, a part of the augmented Lagrangian function, is Lipschitz differentiable with $\beta_i$, and $\lambda_i$ is bounded by $[\underline{\lambda_i},\overline{\lambda_i}]$, it is obvious that $\mathcal{L}_{\rho_i^k} \left(z_i^{r}, z_{f,i}^{r}, s_i^{r}, y_i^{r} \right)$ is lower bounded~\cite{sun2019two}. Thus, the solution of the inner loop of this transformed problem~\eqref{eq:problem_relax2} will converge to the stationary point $(z_i^k, z_{f,i}^k, s_i^k, y_i^k)$. This completes the proof of Theorem~\ref{thm1}.
\end{proof}

\noindent\textbf{5. Update $\lambda_i$ and $\beta_i$:}
Then, in the outer iterations, the dual variable $\lambda_i^k$ is updated. In order to drive the slack variable $s_i$ to zero, the penalty parameter $\beta_i^k$ is multiplied by a ratio $\gamma>1$ if the currently computed $s_i^k$ from the inner iterations does not decrease enough from the previous outer iteration $s_i^{k-1}$, i.e., $\|s_i^k\| > \omega \|s_i^{k-1}\|, \omega\in (0,1)$.
\begin{IEEEeqnarray*}{rCl}
\label{eq:update_lambda_beta}
\lambda_i^{k+1} &=& \operatorname{Proj}_{[\underline{\lambda}_i, \overline{\lambda}_i]} (\lambda_i^k + \beta_i^ks_i^k) \\
\beta^{k+1} &=& \begin{cases}
\gamma \beta_i^k, &\|s_i^k\| > \omega\|s_i^{k-1}\| \\
\beta_i^k, &\|s_i^k\| \leq \omega\|s_i^{k-1}\|,
\end{cases} \yesnumber
\end{IEEEeqnarray*}
where the projection operator $\operatorname{Proj}_{[\underline{\lambda}_i, \overline{\lambda}_i]} (\lambda_i^k + \beta_i^ks_i^k)$ means the projection of $\lambda_i^k + \beta_i^ks_i^k$ onto a closed convex set $[\underline{\lambda}_i, \overline{\lambda}_i]$.
\begin{lemma}
	\label{theorem:project_boundset}
	Given a hypercube set $[a,b]$ with $a,b\in \mathbb R^n$ and $a<b$, for any $z\in \mathbb R^n$, the projection of $z$ onto the hypercube $[a,b]$ is given by
	\begin{IEEEeqnarray}{rCl}~\label{eqn:mj}
		\operatorname{Proj}_{[a, b]} (z) &=& \begin{cases} z_{(i)}, &a_{(i)} \leq z_{(i)}\leq b_{(i)} \\
		    a_{(i)}, &z_{(i)} < a_{(i)} \\
		    b_{(i)}, &z_{(i)} > b_{(i)}, \\
		\end{cases}
	\end{IEEEeqnarray}
	where the subscript $\cdot_{(i)}$ means the $i$th component in vector $z$, $a$ and $b$. 
\end{lemma}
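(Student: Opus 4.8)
The plan is to exploit the product structure of the hypercube together with the separability of the squared Euclidean distance. By definition, $\operatorname{Proj}_{[a,b]}(z)$ is the unique minimizer of $\frac{1}{2}\|z-x\|^2$ over $x \in [a,b]$; this minimizer exists and is unique because $[a,b]$ is a nonempty, closed, convex set and the objective is strictly convex. First I would write the hypercube as the Cartesian product $[a,b] = \prod_{i=1}^{n} [a_{(i)}, b_{(i)}]$ and note that $\|z-x\|^2 = \sum_{i=1}^{n} (z_{(i)} - x_{(i)})^2$, so that the minimization decouples into $n$ independent scalar problems $\min_{x_{(i)} \in [a_{(i)}, b_{(i)}]} (z_{(i)} - x_{(i)})^2$; since each summand of the objective depends on only one coordinate and the feasible set is a product, the vector formed from the coordinatewise minimizers is the global minimizer over $[a,b]$.

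Next I would solve each scalar subproblem. The map $\xi \mapsto (z_{(i)} - \xi)^2$ is strictly convex, decreasing for $\xi < z_{(i)}$ and increasing for $\xi > z_{(i)}$, with unconstrained minimizer $\xi = z_{(i)}$. Restricting to the interval $[a_{(i)}, b_{(i)}]$ then gives three cases: if $a_{(i)} \leq z_{(i)} \leq b_{(i)}$, the minimizer is $z_{(i)}$; if $z_{(i)} < a_{(i)}$, the objective is increasing throughout the interval, so the minimizer is $a_{(i)}$; and if $z_{(i)} > b_{(i)}$, the objective is decreasing throughout the interval, so the minimizer is $b_{(i)}$. Assembling these coordinatewise minimizers yields exactly the closed form~\eqref{eqn:mj}.

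Alternatively, and more in the variational style used earlier, one could take the right-hand side of~\eqref{eqn:mj} as a candidate $\hat{x}$ and verify the optimality condition $\langle z - \hat{x},\, x - \hat{x} \rangle \leq 0$ for all $x \in [a,b]$, which characterizes the projection onto a closed convex set uniquely. This inequality splits as $\sum_{i=1}^{n} (z_{(i)} - \hat{x}_{(i)})(x_{(i)} - \hat{x}_{(i)})$, and each term is nonpositive by a one-line check of the three cases above. There is no genuine obstacle here; the only point requiring a moment of care is justifying the decoupling, i.e., that minimizing a sum of functions each depending on a single coordinate over a product set reduces to minimizing each summand separately — which is immediate.
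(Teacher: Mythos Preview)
Your argument is correct and complete: the separability of the squared distance together with the product structure of the hypercube reduces the projection to $n$ independent scalar problems, each of which is handled by the three-case analysis you give; the alternative variational verification is also fine. The paper itself states this lemma without proof (it is treated as a standard fact, like the preceding Lemma on projection onto $\mathbb{R}_+^n$), so there is nothing to compare against.
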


\begin{theorem}
\label{theorem:converge_outer}
Suppose that the point $(z_i^k, z_{f,i}^k, s_i^k, y_i^k)$ satisfies the optimality condition~\eqref{eq:first_order_optimality_condition}. For any $\epsilon_1, \epsilon_2, \epsilon_3 >0$, after finite outer iterations $k$, appropriate stationary point $(z_i^{k}, z_{f,i}^k, s_i^k, y_i^k)$ can be found such that the optimality condition of the original problem \eqref{eq:problem_simp4} before introducing the slack variable $s_i$ is also satisfied~\cite{sun2019two}, i.e., 
\begin{IEEEeqnarray*}{l}
\label{eq:eq_for_prove}
0 \in \nabla J_i(z_i^k) + \mathbb N_{\mathcal Z_i}(z_i^k) + \mathcal A_i^\top y_i^k \IEEEyesnumber \IEEEyessubnumber \label{subeq:primal_optimality_condition_z_i} \\
0 \in \mathcal B_i^\top y_i^k + \mathbb N_{\mathbb R_+^{2T(m+n)}} \left(z_{f,i}^k\right) \nonumber \IEEEyessubnumber \label{subeq:primal_optimality_condition_z_fi}\\
\mathcal A_iz_i^k + \mathcal B_iz_{f,i}^k - h_i = 0  \label{subeq:primal_optimality_condition_y_i}.  \IEEEyessubnumber
\end{IEEEeqnarray*}
\end{theorem}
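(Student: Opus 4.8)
The plan is to reduce the statement to a single fact: that the outer recursion~\eqref{eq:update_lambda_beta} drives the slack variable $s_i^k$ to zero. Comparing the system~\eqref{eq:first_order_optimality_condition} supplied by the inner loop (Theorem~\ref{thm1}) with the target system~\eqref{eq:eq_for_prove}, one sees that~\eqref{subeq:optimality_condition_z_i} and~\eqref{subeq:optimality_condition_z_fi} do not contain $s_i$ at all and therefore coincide verbatim with~\eqref{subeq:primal_optimality_condition_z_i} and~\eqref{subeq:primal_optimality_condition_z_fi}, so these are satisfied exactly at every outer iteration; while~\eqref{subeq:optimality_condition_y_i}, namely $\mathcal A_iz_i^k+\mathcal B_iz_{f,i}^k-h_i+s_i^k=0$, collapses to~\eqref{subeq:primal_optimality_condition_y_i} precisely when $s_i^k=0$, with a residual of norm $\|s_i^k\|$ otherwise. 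Thus, consistently with Remark~\ref{remark1}, it suffices to show that for any tolerances $\epsilon_1,\epsilon_2,\epsilon_3>0$ there is a finite $k$ with $\|s_i^k\|\le\min\{\epsilon_1,\epsilon_2,\epsilon_3\}$; in fact $\epsilon_1$ and $\epsilon_2$ are superfluous since~\eqref{subeq:primal_optimality_condition_z_i}--\eqref{subeq:primal_optimality_condition_z_fi} already hold without slack, and only the feasibility residual $\epsilon_3$ is at stake.

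First I would invoke Theorem~\ref{thm1} to obtain, for each fixed outer triple $(\lambda_i^k,\beta_i^k,\rho_i^k)$ with $\rho_i^k\ge\sqrt{2}\,\beta_i^k$, a stationary point $(z_i^k,z_{f,i}^k,s_i^k,y_i^k)$ of~\eqref{eq:problem_relax2} obeying~\eqref{eq:first_order_optimality_condition}; in particular~\eqref{subeq:optimality_condition_s_i} gives the identity $s_i^k=-(\lambda_i^k+y_i^k)/\beta_i^k$. Then I would split the analysis on the behaviour of $\beta_i^k$ in~\eqref{eq:update_lambda_beta}. If the penalty is increased only finitely often, then beyond some index $\beta_i^k$ is constant and the acceptance test $\|s_i^k\|\le\omega\|s_i^{k-1}\|$, $\omega\in(0,1)$, holds at every subsequent iteration, so $\|s_i^k\|$ decays geometrically to $0$. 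If instead the penalty is increased infinitely often, then $\beta_i^k\to\infty$; since $\lambda_i^k$ stays in the compact box $[\underline\lambda_i,\overline\lambda_i]$ by the projection in~\eqref{eq:update_lambda_beta}, the identity $s_i^k=-(\lambda_i^k+y_i^k)/\beta_i^k$ shows that $s_i^k\to0$ as soon as the dual sequence $\{y_i^k\}$ is bounded.

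The main obstacle is exactly this uniform boundedness of $\{y_i^k\}$ (equivalently of $\{\beta_i^k s_i^k\}$). I would establish it from the lower boundedness of $\mathcal L_{\rho_i^k}$ proved at the end of Theorem~\ref{thm1} together with compactness of the admissible region --- the bound constraints $F_{z,i}z_i\le f_{z,i}$, recorded through $z_{f,i}\ge0$ and the equality constraint, confine $z_i$ and $z_{f,i}$ to a bounded set on which $\nabla J_i$ is bounded --- and from the stationarity inclusions~\eqref{subeq:optimality_condition_z_i}--\eqref{subeq:optimality_condition_z_fi}, which relate $y_i^k$ to these bounded quantities. This is the standard safeguarded augmented-Lagrangian boundedness estimate; rather than reproduce it in full I would cite~\cite{sun2019two} and adapt its argument to the present three-block structure. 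Verifying the constraint-qualification/coercivity hypotheses under which that estimate applies for our specific $\mathcal Z_i$, $\mathcal A_i$, and $\mathcal B_i$ is the delicate point of the whole proof.

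Finally I would assemble the conclusion. In either branch $\|s_i^k\|\to0$, so given $\epsilon_1,\epsilon_2,\epsilon_3>0$ one picks $k$ with $\|s_i^k\|\le\min\{\epsilon_1,\epsilon_2,\epsilon_3\}$; at such $k$ the point $(z_i^k,z_{f,i}^k,s_i^k,y_i^k)$ satisfies~\eqref{subeq:primal_optimality_condition_z_i}--\eqref{subeq:primal_optimality_condition_z_fi} exactly (inherited unchanged from~\eqref{subeq:optimality_condition_z_i}--\eqref{subeq:optimality_condition_z_fi}) and~\eqref{subeq:primal_optimality_condition_y_i} up to a residual bounded by $\|s_i^k\|\le\epsilon_3$, which is the asserted approximate stationarity for the pre-slack problem~\eqref{eq:problem_simp4}, with the limit $k\to\infty$ giving exact satisfaction. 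This completes the proof.
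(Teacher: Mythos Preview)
Your proposal is correct and follows essentially the same route as the paper: a case split on whether $\beta_i^k$ stays bounded (yielding geometric decay of $\|s_i^k\|$ via the acceptance test) or diverges (yielding $s_i^k\to 0$ from the divided identity $\lambda_i^k/\beta_i^k + s_i^k + y_i^k/\beta_i^k = 0$ together with boundedness of $\lambda_i^k$ and $y_i^k$), after which~\eqref{subeq:primal_optimality_condition_z_i}--\eqref{subeq:primal_optimality_condition_z_fi} are inherited verbatim and~\eqref{subeq:primal_optimality_condition_y_i} follows from $s_i^\star=0$. The only difference is that the paper simply \emph{assumes} at the outset that the full tuple $(z_i^k,z_{f,i}^k,s_i^k,y_i^k)$ converges to a limit $(z_i^\star,\dots,y_i^\star)$, thereby sidestepping the boundedness-of-$y_i^k$ issue you flag as the delicate point; your treatment is in that sense more explicit about where the real work lies.
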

\begin{proof}
	Assume that $(z_i^k, z_{f,i}^k, s_i^k, y_i^k)$ converges to the stationary point $(z_i^\star, z_{f,i}^\star, s_i^\star, y_i^\star)$. We have $z_i^\star \in \mathcal Z_i, z_{f,i}\in \mathbb R_+^{2T(m+n)}$ and $\mathcal A_i z_i^\star + \mathcal B_i z_{f,i}^\star - h_i + s_i^\star=0$.
	
	For the case where $\beta_i^k$ is bounded, we have $z_i^k\rightarrow 0$ and $z_i^\star=0$. For the case where $\beta_i^k$ is not bounded, based on the optimality condition~\eqref{eq:first_order_optimality_condition}, it follows that
	\begin{IEEEeqnarray}{rCl}
		\label{eq:subeq_prove_converge}
		\frac{{\lambda_i}^{k}}{\beta_i^{k}} + s_i^{k,r} + \frac{{y_i}^{k,r}}{\beta_i^{k}} = 0. 
	\end{IEEEeqnarray}
	Then, we have $s_i^\star = 0$ by taking the limitation of~\eqref{eq:subeq_prove_converge} to zero, as $y_i^k \rightarrow y_i^\star$ and $\lambda_i^k\in [\underline \lambda_i, \overline \lambda_i]$. 
	Since we have $s_i^\star = 0$ in both of the two cases, the optimality condition with respect to $y_i$, i.e., \eqref{subeq:primal_optimality_condition_y_i}, can be satisfied. Besides, the optimality condition with respect to $z_i$ and $z_{f,i}$, i.e., \eqref{subeq:primal_optimality_condition_z_i} and \eqref{subeq:primal_optimality_condition_z_fi}, can also be satisfied by taking $k\rightarrow 0$ on \eqref{subeq:optimality_condition_z_i} and \eqref{subeq:optimality_condition_z_fi}, respectively. This completes the proof of Theorem~\ref{theorem:converge_outer}.
\end{proof}

\begin{remark}
	Our algorithm explicitly demonstrates how to apply the hierarchical ADMM in the multi-agent system with the presence of nonconvex collision-avoidance constraints. The closed-form solution of every variable is provided. In this approach, a proximal term is introduced to the two-level algorithm presented in~\cite{sun2019two} and a more compact convergence condition is proposed, i.e., $\rho_i \geq \sqrt{2} \beta_i$. Therefore, the Theorem 1 in our paper proves the convergence of the inner iteration under the condition $\rho_i \geq \sqrt{2} \beta_i$. Theorem 2 is proved under different assumptions, compared with the algorithm proposed in~\cite{sun2019two}. Thus, we believe our algorithm and theorems provide more details about the hierarchical ADMM algorithms for the multi-gent systems.
\end{remark}

The stopping criterion of this ADMM algorithm is given by
\begin{IEEEeqnarray*}{l}
	\label{eq:stopping_criterion}
	\left\|\rho_i^k\mathcal A_i^T\left( \mathcal B_iz_{f,i}^{r} + s_i^{r}-\mathcal B_iz_{f,i}^{r+1} - s_i^{r+1}  \right)\right\| \leq \epsilon_1^k  \IEEEyesnumber \IEEEyessubnumber\\
	\left\|\rho_i^k\mathcal B_i^T\left(s_i^r-s_i^{r+1}\right)\right\| \leq \epsilon_2^k  \IEEEnonumber \IEEEyessubnumber \\
	\left\|\mathcal A_iz_i^{r+1} + \mathcal B_iz_{f,i}^{r+1} - h_i + s_i^{r+1} \right\| \leq \epsilon_3^k,  \IEEEnonumber \IEEEyessubnumber\textbf{}
\end{IEEEeqnarray*}
where $\epsilon_e^k\rightarrow 0, \forall e \in \{1,2,3\}$. 

To summarize the above developments, the pseudocode of this algorithm is presented in Algorithm~\ref{alg:hierarchicalADMM}. After obtaining each agent's control input and executing the control input, the agents will communicate with their neighbors to obtain the updated position information of their neighbors. Then, this process will be repeated until finishing the designed task.
\begin{algorithm}
	\caption{Hierarchical ADMM for Multi-Agent Decision Making}
	\label{alg:hierarchicalADMM}
	\begin{algorithmic}
		\STATE {Initialize $z_i^0\in \mathbb R^{T(m+n)}, z_{f,i}^0\in \mathbb R^{2T(m+n)}, s_i^0\in \mathbb R^{2Tm+3Tn}$; initialize the dual variable $\lambda_i^0\in [\underline{\lambda}_i, \overline{\lambda}_i]$, where $\underline{\lambda}_i, \overline{\lambda}_i \in \mathbb R^{2Tm+3Tn}$, and $\overline{\lambda}_i > \underline{\lambda}_i$; initialize the penalty parameter $\beta_i^0>0$, and the constant $\omega\in [0,1)$ and $\gamma>1$; initialize the sequence of tolerance $\{\epsilon_1^k, \epsilon_2^k,\epsilon_3^k\} \in \mathbb R_+$ with $\lim_{k\rightarrow\infty} \epsilon_e^k = 0, \forall e\in \{1,2,3\}$}. Given the terminated tolerances $\epsilon_{\text{term},e}, e = \{1,2,3\}$, if $\epsilon_e^k \leq \epsilon_{\text{term},e}$, $\epsilon_e^k = \epsilon_{\text{term},i}$ for $e = \{1,2,3\}$.
		\STATE {Set the outer iteration $k = 0$.}
		\WHILE {Stopping criterion is violated}
		\STATE {Given $\lambda_i^k$ and $\beta_i^k$, initialize $z_i^{k,0}, z_{f,i}^{k,0}, s_i^{k,0}$ and $y_i^{k,0}$ such that $\lambda_i^k+\beta_i^ks_i^{k,0}+y_i^{k,0}=0$; initialize tolerance $(\epsilon_1^k, \epsilon_2^k, \epsilon_3^k)$; $\rho_i^k \geq \sqrt{2}\beta_i^k$.}
		\STATE {Set the inner iteration $r=0$.}
		\WHILE {Stopping criterion is violated}
		\STATE {Update $z_i^{k,r+1}$ by~\eqref{eq:update_z_i} via nonlinear programming solvers.}
		\STATE {Update $z_{f,i}^{k,r+1}$, $s_i^{k,r+1}$, $y_i^{k,r+1}$ by~\eqref{eq:update_z_f_i},~\eqref{eq:update_s_i},~\eqref{eq:update_y_i}, respectively.}
		\STATE {$r = r+1$.}
		\ENDWHILE
		\STATE {Set $z_i^{k+1}, z_{f,i}^{k+1}, s_i^{k+1}$, $y_i^{k+1}$ to be $z_i^{k,r+1}, z_{f,i}^{k,r+1}, s_i^{k,r+1}$, $y_i^{k,r+1}$, respectively.}
		\STATE {Update $\lambda_i^{k+1}$ and $\beta_i^{k+1}$ by~\eqref{eq:update_lambda_beta}.}
		\STATE {$k=k+1$.}
		\ENDWHILE
	\end{algorithmic}
\end{algorithm}

\section{Improved Hierarchical ADMM}
\label{section:improved_Hierarchical_ADMM}
In the hierarchical ADMM approach, the minimization of $z_i$ in each iteration is computationally expensive, and it requires nonlinear programming (NLP) solvers. Therefore, the improved minimization of $z_i$ is required to improve the computational efficiency by approximating and solving this inexact minimization problem. 

The original $z_i$-minimization problem can be expressed as
\begin{IEEEeqnarray*}{rCl}
\label{eq:z_i_minimization}
\underset{z_i}{\operatorname{min}} &\quad& J_i(z_i) + \frac{\rho_i^k}{2} \left\| \mathcal A_iz_i + \mathcal B_iz_{f,i}^{k,r} + s_i^{k,r} - h_i+\frac{y_i^{k,r}}{\rho_i^{k}} \right\|^2 \\
\operatorname{s.t.} && \phi\left(z_i; [p_j]\right)\leq 0 \\
&& \psi\left(z_i; [p_j]\right) \leq 0 , \yesnumber
\end{IEEEeqnarray*}
where
\begin{IEEEeqnarray*}{rCl}
	\phi\left(z_i; [p_j]\right) &=& \mathcal{M}\left(z_i; [p_j]\right) \mathcal{P}\left(z_i; [p_j]\right) - d_{\mathrm{cmu}}^2\\
	\psi\left(z_i; [p_j]\right)	&=& d_{\mathrm{safe}}^2 - \mathcal{M}\left(z_i;[p_j]\right) \mathcal{P}\left(z_i; [p_j]\right).
\end{IEEEeqnarray*}
The KKT conditions of this $z_i$-minimization problem~\eqref{eq:z_i_minimization} are
\begin{IEEEeqnarray*}{rCl}
	0 &=& \nabla J_i(z_i) + {\rho_i^k} \mathcal A_i^\top \left( \mathcal A_iz_i + \mathcal B_iz_{f,i}^{k,r} + s_i^{k,r} - h_i+\frac{y_i^{k,r}}{\rho_i^{k}} \right) \\
	  & & + \sum\limits_{\theta=1}^{\theta_\phi} \mu_{\theta,i} \nabla \phi_{\theta} \left(z_i; [p_j]\right) + \sum\limits_{\theta=1}^{\theta_\psi} \kappa_{\theta,i} \nabla \psi_{\theta}\left(z_i; [p_j] \right) \\
	0 &=& \mu_{\theta,i} \phi_{\theta}\left(z_i;[p_j]\right), \: \forall \theta = 1,2,\cdots, \theta_\phi \\
	0 &=& \kappa_{\theta,i} \psi_{\theta}\left(z_i;[p_j]\right) , \: \forall \theta = 1,2,\cdots, \theta_\psi. \yesnumber
\end{IEEEeqnarray*} 

Notice that the hierarchical ADMM algorithm requires complete minimization of $z_i$ in each inner iteration. However, this is neither desirable due to the computational cost nor practical, as any NLP solver finds only a point that approximately satisfies the KKT optimality conditions. In the hierarchical ADMM algorithm, we propose the use of the interior-point method to solve the $z_i$ minimization problem~\eqref{eq:z_i_minimization}. The interior-point method employs double-layer iterations to find the solution. In the outer iteration, a barrier technique is used to convert the inequality constraints into an additional term in the objective; the stationary points of resulting barrier problems converge to true stationary points as the barrier parameter converges to 0. In the inner iteration, a proper search method is used to obtain the optimum of the barrier problem. Since both the interior-point algorithm and the hierarchical ADMM algorithm have a double-layer structure, we consider matching these two layers.

In order to accelerate the computation process of solving~\eqref{eq:z_i_minimization}, the barrier method can be employed to transform the inequality constraints to additional terms in the objective function. For example, in the $k$-th outer loop, the function $J_i(z_i)$ can be added with a barrier term $-b_i^k \sum_{\theta_{\phi}} \ln(-\phi(z_i))$ based on the barrier method, where $b_i^k$ is the barrier parameter with $\lim\limits_{k \rightarrow \infty} b_i^k = 0$. Hence, the barrier augmented Lagrangian in the $k$-th outer loop can be written as 
\begin{IEEEeqnarray}{rCl}
\label{eq:barrier_L}
{\hat{\mathcal L}}_{b_i^k} = \mathcal L_{\rho_i^k} - b_i^k\sum\limits_{\theta=1}^{\theta_\phi} \ln(-\phi_\theta(z_i)).
\end{IEEEeqnarray}
If the $z_i$-minimization step returns $z_i^{k,r+1}$ by minimizing $\hat{\mathcal L}_{b_i}$ with respect to $z_i$, then the inner iterations lead to the decrease of $\hat{\mathcal L}_{b^k_i}$, which also matches the KKT conditions of the original optimization problem. Therefore, the outer iterations can find an approximate stationary point of the original problem with the decrease of the barrier parameter $b_i^k$. Note that only $\psi(z_i; [p_j])$ is used as the inequality constraint for the purpose of demonstration.

Therefore, it remains to minimize $\hat{\mathcal L}_{b_i}$, which is given by
\begin{IEEEeqnarray*}{rCl}
\hat{\mathcal L} _{b_i^k}
 &=& J_i(z_i) + \frac{\rho_i^k}{2} \left\| \mathcal A_iz_i + \mathcal B_iz_{f,i}^{k,r} + s_i^{k,r} - h_i+\frac{y_i^{k,r}}{\rho_i^{k}} \right\|^2 \\
 && - \left\langle b_i^k 1_{Tr_i}, \ln \left(h\left(z_i; [p_j]\right) - d_{\mathrm{safe}}^2 \right) \right\rangle, \yesnumber
\end{IEEEeqnarray*}
where $	h\left(z_i; [p_j]\right) = {\mathcal{M}\left(z_i; [p_j]\right) \mathcal{P}\left(z_i; [p_j]\right)}$.

\begin{lemma}~\label{lemma:majun}
	The gradient of
	$h\left(z_i; [p_j]\right)$ with respect to $z_i$ is a linear mapping with respect to $z_i$.
\end{lemma}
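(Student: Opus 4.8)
The plan is to make the composite structure of $h$ completely explicit and observe that, after simplification, $h(\,\cdot\,; [p_j])$ is merely a block-wise sum of squares of an affine function of $z_i$; the gradient of such a function is then an affine (linear-plus-constant) map, which is the sense in which the gradient is ``linear'' in $z_i$. First I would record that $\mathcal P(z_i; [p_j])$ is affine in $z_i$: setting $\Pi := (I_T\otimes 1_{r_i}\otimes I_{n_p})M_p$ and $c := -\,1_T\otimes[p_j]$, both independent of $z_i$, we have $\mathcal P(z_i;[p_j]) = \Pi z_i + c$.

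Next I would unpack $\mathcal M(z_i;[p_j])$. The outer product $\mathcal P(z_i;[p_j])\,1_{Tr_in_p}^\top$ is the matrix all of whose columns equal $\mathcal P(z_i;[p_j])$, so its Hadamard product with $I_{Tr_in_p}$ retains only the diagonal and equals $\operatorname{diag}\big(\mathcal P(z_i;[p_j])\big)$. Hence
\begin{equation*}
\mathcal M(z_i;[p_j]) = \big(I_{Tr_i}\otimes 1_{n_p}^\top\big)\operatorname{diag}\big(\mathcal P(z_i;[p_j])\big),
\end{equation*}
and therefore
\begin{equation*}
h(z_i;[p_j]) = \mathcal M(z_i;[p_j])\,\mathcal P(z_i;[p_j]) = \big(I_{Tr_i}\otimes 1_{n_p}^\top\big)\big(\mathcal P(z_i;[p_j])\odot \mathcal P(z_i;[p_j])\big).
\end{equation*}
Reading off the $\theta$-th entry, $h_\theta(z_i;[p_j]) = \sum_{l=1}^{n_p}\big[\mathcal P(z_i;[p_j])\big]_{(\theta-1)n_p+l}^2 = \big\| S_\theta(\Pi z_i + c)\big\|^2$, where $S_\theta\in\mathbb R^{n_p\times Tr_in_p}$ is the selector of the $\theta$-th length-$n_p$ block (so $S_\theta(\Pi z_i+c)$ is the corresponding position difference $p_i(\tau)-p_j(t)$). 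In particular, each $h_\theta$ is a quadratic polynomial in $z_i$.

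Finally I would differentiate: $\nabla_{z_i} h_\theta(z_i;[p_j]) = 2\,\Pi^\top S_\theta^\top S_\theta(\Pi z_i + c) = \big(2\,\Pi^\top S_\theta^\top S_\theta\Pi\big)z_i + 2\,\Pi^\top S_\theta^\top S_\theta c$, a fixed linear operator applied to $z_i$ plus a fixed vector. Stacking these gradients as the rows of the Jacobian $\nabla_{z_i} h(z_i;[p_j])\in\mathbb R^{Tr_i\times T(m+n)}$ shows that $z_i\mapsto\nabla_{z_i} h(z_i;[p_j])$ is an affine (linear-plus-constant) mapping of $z_i$, which is the assertion of the lemma. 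The only delicate point is the bookkeeping in the middle step: recognizing the Hadamard product of $I_{Tr_in_p}$ with the rank-one matrix as a diagonalization, and verifying that $I_{Tr_i}\otimes 1_{n_p}^\top$ indeed sums consecutive blocks of $n_p$ coordinates, so that $h_\theta$ is exactly the squared norm of one position difference. Once that Kronecker/block structure is pinned down, the differentiation is immediate.
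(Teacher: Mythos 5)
Your proposal is correct and follows essentially the same route as the paper: you identify $\mathcal P$ as affine in $z_i$, recognize $I_{Tr_in_p}\odot\bigl(\mathcal P\,1_{Tr_in_p}^\top\bigr)$ as $\operatorname{diag}(\mathcal P)$ so that $h$ becomes the block-summed square $\bigl(I_{Tr_i}\otimes 1_{n_p}^\top\bigr)(\mathcal P\odot\mathcal P)$ (the paper's $W^\top\Omega\,\omega$), and differentiate to obtain a Jacobian that depends affinely on $z_i$. Your component-wise reading of $h_\theta$ as $\|S_\theta(\Pi z_i+c)\|^2$ is just a more explicit rendering of the paper's matrix-differential computation, and your remark that ``linear'' here means affine matches what the paper's final expression actually shows.
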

\begin{proof}
	Define $W = I_{Tr_i}\otimes 1_{n_p}\in\mathbb R^{Tr_in_p\times Tr_i}$, $\omega = (I_T\otimes1_{r_i}\otimes I_{n_p})M_pz_i - 1_{T} \otimes [p_j] \in \mathbb R^{Tr_in_p}$, $\Omega = \operatorname{diag}(\omega)\in \mathbb R^{Tr_in_p\times Tr_in_p}$. Then, 
	\begin{IEEEeqnarray*}{rCl}
		h\left(z_i; [p_j]\right) = W^\top(I_{Tr_in_p}\odot(\omega 1_{Tr_in_p}^\top)) \omega = W^\top \Omega \omega. 
	\end{IEEEeqnarray*}
	Since $\Omega = I_{Tr_in_p}\odot(\omega 1_{Tr_in_p}^\top) = I_{Tr_in_p} \odot(\mathbf1_{Tr_in_p} \omega^\top)$, we have
	\begin{IEEEeqnarray*}{rCl}
		\textup{d} h\left(z_i;[p_j]\right) 
		&=& 2 W^\top \Omega (I_T\otimes 1_{r_i}\otimes I_{n_p})M_p \textup{d}z_i. \IEEEeqnarraynumspace \yesnumber
	\end{IEEEeqnarray*}
	It follows 
	\begin{IEEEeqnarray*}{rCl}
		&&  \frac{\textup{d} h\left(z_i;[p_j]\right)}{ \textup{d} z_i} \\
		&=& 2(I_{Tr_i}\otimes 1_{n_p})^\top \Big[ I_{Tr_in_p} \odot \Big( \big((I_T\otimes 1_{r_i}\otimes I_{n_p})M_pz_i \\
		 && \; - 1_{T\times 1} \otimes [p_j]\big)  1_{Tr_in_p}^T \Big) \Big] \cdot  (I_T\otimes 1_{r_i}\otimes I_{n_p})M_p .  \yesnumber
	\end{IEEEeqnarray*}
Therefore, Lemma~\ref{lemma:majun} is proved.
\end{proof}

Set $\Upsilon_i = -\psi \left(z_i; [p_j]\right)$ and define $\Xi_i$ such that $\Xi_i \odot \Upsilon_i = 1_{Tr_i}$. 
Then, the gradient of the barriered augmented Lagrangian function $\hat{\mathcal L} _{b_i^k}$ is 
\begin{IEEEeqnarray*}{rCl}
	\label{eq:gradient_L_b}
   && \nabla \hat{\mathcal L} _{b_i^k} \\
  &=& 2H_i(z_i-z_{i,ref}) + {\rho_i^k} \mathcal A_i^\top \Bigg( \mathcal A_iz_i + \mathcal B_iz_{f,i}^{k,r} + s_i^{k,r} - h_i \\
   && +\frac{y_i^{k,r}}{\rho_i^{k}} \Bigg) - 2b_i^k \operatorname{Tr} \left(1_{Tr_i}^\top \Xi_i \odot W^\top \Omega (I_T\otimes 1_{r_i}\otimes I_{n_p}) M_p \right) . \\ \yesnumber 
\end{IEEEeqnarray*}

A closed-form algebraic solution of problem~\eqref{eq:z_i_minimization} is challenging to determine. However, since the gradient $\nabla \hat{\mathcal L} _{b_i^k}$ is available, some well-known gradient-based numerical methods, e.g., Barzilai-Borwein, Polak-Ribiere, and Fletcher-Reeves, can be used to calculate the optimal $z_i$. Therefore, the $z_i$-update step in Algorithm~\ref{alg:hierarchicalADMM} can be replaced by solving an unconstrained optimization problem~\eqref{eq:z_i_minimization} with the known gradient~\eqref{eq:gradient_L_b}. 

Usually, iterative algorithms for NLP need to be called when solving the $z_i$ minimization problem~\eqref{eq:z_i_minimization}, and always searching for a highly accurate solution in each ADMM iteration will result in an excessive computational cost. It is thus desirable to solve the optimization subproblem~\eqref{eq:z_i_minimization} in ADMM inexactly when the dual variables are yet far from the optimum, i.e., to allow $z_i^{k,r+1}$ to be chosen such that
\begin{IEEEeqnarray}{rCl}
	\label{eq:error_d}
	d_x^{k+1} \in \partial_{z_i} \mathcal{L}_{\rho_i}.
\end{IEEEeqnarray}
In this paper, we use externally shrinking and summable sequence of the absolute errors, i.e.,
\begin{IEEEeqnarray}{rCl}
	\label{eq:error_d_bound}
	\|d_z^k\| \leq \epsilon_4^k, \sum_{k=1}^{\infty} \epsilon_4^k \leq \infty.
\end{IEEEeqnarray}
such as a summable absolute error sequence can effectively reduce the total number of subroutine iterations throughout the ADMM algorithm. Such a criterion is a constructive one, rendered to guarantee the decrease of a quadratic distance between the intermediate solutions $(z_i^k, z_{f,i}^k, s_i^{k}, y_i^k)$ and the optimum $(z_i^*, z_{f,i}^k, s_i^{k}, y_i^k)$.

The above approximate NLP solution is realizable by NLP solvers where the tolerances of the KKT conditions are allowed to be specified by the user, e.g., the IPOPT solver. The pseudocode of the detailed algorithm is shown in Algorithm~\ref{alg:improved_hierarchicalADMM}.

\begin{algorithm}
	\caption{Improved Hierarchical ADMM for Multi-Agent Decision Making}
	\label{alg:improved_hierarchicalADMM}
	\begin{algorithmic}
		\STATE {Initialize $z_i^0\in \mathbb R^{T(m+n)}, z_{f,i}^0\in \mathbb R^{2T(m+n)}, s_i^0\in \mathbb R^{2Tm+3Tn}$; initialize the dual variable $\lambda_i^0\in [\underline{\lambda}_i, \overline{\lambda}_i]$, where $\underline{\lambda}_i, \overline{\lambda}_i \in \mathbb R^{2Tm+3Tn}$, and $\overline{\lambda}_i > \underline{\lambda}_i$; initialize the penalty parameter $\beta_i^0>0$, and the constant $\omega\in [0,1)$ and $\gamma>1$; initialize the sequence of tolerance $\{\epsilon_1^k, \epsilon_2^k,\epsilon_3^k, epsilon_4^k, \epsilon_b\} \in \mathbb R_+$ with $\lim_{k\rightarrow\infty} \epsilon_e^k = 0, \forall e\in \{1,2,3,4,b\}$}. Given the terminated tolerances $\epsilon_{\text{term},e}, e = \{1,2,3,4,b\}$, if $\epsilon_e^k \leq \epsilon_{\text{term},e}$, $\epsilon_e^k = \epsilon_{\text{term},i}$ for $e = \{1,2,3,4,b\}$.
		\STATE {Set the outer iteration $k = 0$.}
		\WHILE {Stopping criterion is violated \textbf{or} $b_i^k \geq \epsilon_b$}
		\STATE {Given $\lambda_i^k$ and $\beta_i^k$, initialize $z_i^{k,0}, z_{f,i}^{k,0}, s_i^{k,0}$ and $y_i^{k,0}$ such that $\lambda_i^k+\beta_i^ks_i^{k,0}+y_i^{k,0}=0$; initialize tolerance $(\epsilon_1^k, \epsilon_2^k, \epsilon_3^k)$; $\rho_i^k \geq \sqrt{2}\beta_i^k$.}
		\STATE {Set the inner iteration $r=0$.}
		\WHILE {Stopping criterion is violated}
		\STATE {Update $z_i^{k,r+1}$ by minimize~\eqref{eq:barrier_L}.}
		\STATE {Update $z_{f,i}^{k,r+1}$, $s_i^{k,r+1}$, $y_i^{k,r+1}$ by~\eqref{eq:update_z_f_i},~\eqref{eq:update_s_i},~\eqref{eq:update_y_i}, respectively.}
		\STATE {$r = r+1$.}
		\ENDWHILE
		\STATE {Set $z_i^{k+1}, z_{f,i}^{k+1}, s_i^{k+1}$, $y_i^{k+1}$ to be $z_i^{k,r+1}, z_{f,i}^{k,r+1}, s_i^{k,r+1}$, $y_i^{k,r+1}$, respectively.}
		\STATE {Update $\lambda_i^{k+1}$ and $\beta_i^{k+1}$ by~\eqref{eq:update_lambda_beta}.}
		\STATE {$k=k+1$.}
		\ENDWHILE
	\end{algorithmic}
\end{algorithm}

In this method, a nonlinear programming problem without constraints is solved with inequality constraints handled by a fixed barrier constant throughout inner iterations. The barrier constant decreases across outer iterations. In the hierarchical ADMM, the NLP solver needs to solve an inequality-constrained NLP in each inner iteration. When the NLP solver, i.e., interior-point algorithm, is called for NLP with inequalities, the barrier constant will go through an iteration inside the solver, and thus every inner iteration will have more loops of internal iteration. By this barrier Lagrangian construction, we integrate the inner loop of the IPOPT with the inner loop of the hierarchical ADMM algorithm. In addition, in the improved hierarchical ADMM, each inner iteration only solves NLP, and an approximate solution is reached without the need of convergence. The accuracy, namely the tolerances of the errors from the KKT conditions, can be controlled when using the interior-point algorithm. Therefore, these tolerances can be set adaptively throughout the iterations (for instance, the tolerances of the 1st, 5th, and 20th iteration can be set to 0.1, 0.01, and 0.001, respectively). By this adaptive tolerance setting, we integrate the outer loop of the IPOPT with the inner loop of the hierarchical ADMM. Therefore, the improved hierarchical ADMM gives less iteration number and computation time than the hierarchical ADMM.

\section{Illustrative Example}
\label{section:Illustrative_Examples}
In order to show the effectiveness of the hierarchical ADMM and improved hierarchical ADMM, a multi-UAV system is presented as an application test platform for demonstration of the proposed decision-making framework. The simulations are performed in Python 3.7 environment with processor Intel Xeon(R) E5-1650 v4 @ 3.60GHz. All nonlinear problems are solved by the open source IPOPT solver.  
\subsection{Dynamic Model}
For each quadcopter system, the system model can be expressed in the following state-space representation:
\begin{IEEEeqnarray*}{c}
	\label{eq:model}
	\dot{x} = {\begin{bmatrix} {\dot{ p}} \\ {\dot{ v}} \\ {\dot{\zeta}} \\ {\dot{\omega}}\end{bmatrix}} =
	{\begin{bmatrix} { R(\phi, \theta, \psi)}^\top { v} \\ {-{\omega}\times  v + g { R(\phi, \theta, \psi)}  e} \\  W(\phi, \theta, \psi) {\omega} \\  J^{-1}(-{\omega} \times { J} {\omega}) \end{bmatrix}
		+ {\tilde{ B}} {({ u}_{\mathrm{eq}} + { u})}}, \IEEEeqnarraynumspace \yesnumber 
\end{IEEEeqnarray*}
where $x\in\mathbb{R}^{12}$ and $u\in\mathbb{R}^4$ are state and control input vector for the agents, respectively. In this dynamics model, rotor thrusts of each rotor are chosen as control input vector, i.e., $u=\begin{bmatrix}F_1 & F_2 & F_3 & F_4\end{bmatrix}^\top$. ${p}=\begin{bmatrix} p_x & p_y & p_z \end{bmatrix}^\top$ is a vector comprising the position  in the $x$-, $y$-, and $z$-dimension. Additionally, the velocity vector is represented by ${v}=\begin{bmatrix} v_x & v_y & v_z\end{bmatrix}^\top$. $\zeta=\begin{bmatrix} \phi& \theta & \psi \end{bmatrix}^\top$ denotes a vector in terms of the roll, pitch, and yaw angles. The angular velocity vector is represented by $\omega=\begin{bmatrix} \omega_x & \omega_y & \omega_z\end{bmatrix}^\top$. Also, ${e}=\begin{bmatrix} 0 & 0 &1\end{bmatrix}^\top$, $ u_{\mathrm{eq}} = \begin{bmatrix}  {mg}/{4} &  {mg}/{4} &  {mg}/{4} &  {mg}/{4} \end{bmatrix}^\top$ is the control input to balance the gravity of the agents, $g$ is the gravitational acceleration, $m$ is the mass of the quadcopter, $ J=\textup{diag}(J_x, J_y, J_z) $ denotes the moment of inertia of the quadcopter, $R(\phi,\theta,\psi)$ and $W(\phi,\theta,\psi)$ denote the rotation matrices of the quadcopter. In addition, ${\tilde{B}}$ is the control input matrix in the above state-space representation. 

Here,~\eqref{eq:model} is nonlinear and time-variant, which can be represented as a pseudo-linear form by using state-dependent coefficient factorization~\cite{zhang2019integrated}. The state-space expression is 
\begin{IEEEeqnarray}{rCl}
	\label{eq:dynamics_state-space}
	{x_{t+1}}=(\tilde{ A}_t\Delta t + I_{12})  x_{t} + \tilde{ B} \Delta t ( u_\textup{eq}+  u_{t}), \IEEEeqnarraynumspace
\end{IEEEeqnarray}
where $\Delta t$ is the sampling time interval. Since ${\tilde{A}}_t$ and $\tilde{B}$ are dependent on the current state $x$, this state-space representation is in a pseudo-linear form, and then we can suitably consider the system matrices to be constant during the prediction horizon. 
More details of the parameter settings in \eqref{eq:model} and \eqref{eq:dynamics_state-space} are presented in~\cite{zhang2020trajectory}. 

\subsection{Optimization and Simulation Results}
The dynamic model of each agent in the multi-UAV system is shown in~\eqref{eq:dynamics_state-space}. All the agents are set evenly distributed in a circle with the radius of $2$ m and $p_z=0$ m, and each agent needs to reach its corresponding point in this same sphere. Therefore, a large number of potential collision possibilities are set up in the simulation task, and our objective is to avoid these designed potential collisions among these agents and make all agents reach their desired destinations. Here, a distributed MPC with a quadratic objective function is designed to realize the path tracking and collision avoidance tasks. Set the number of agents $N=8$. The safety distance $d_{\text{safe}}$ is $0.2$ m. The dimensions are $n=12, m=4,n_p=3$. In addition, the agents are connected with all of the remaining agents. The adjacency matrix of the communication network $\mathcal D$ is the difference between the upper triangular matrix of the square one matrix $1_{(N,N)}$ and the identity matrix $I_N$, i.e., $\mathcal D = U(1_{(N,N)}) - I_N$.  In this adjacency matrix $\mathcal D$, each entry $\mathcal D_{ij}$ represents whether there is the connection between the $i$th agent and the $j$th agent. If $\mathcal D_{ij} = 1$, there is a communication connection between the two agents; otherwise, there is no connection between the two agents. For example, the $i$th agent will get communication with the $i+1, i+2, \cdots, N$ agents.
The velocity of the agents is confined into [-5, 5] m/s, and the upper and lower bounds of angles $(\phi,\psi, \theta)$ in three dimensions are $(\pi, \frac{\pi}{2}, \pi)$ and $(-\pi, -\frac{\pi}{2}, -\pi)$. The upper bound of control input variable is $\overline u = (1.96, 1.96, 1.96, 1.96)$ N and its lower bound is $\underline u = -\overline u$. 
Set the prediction horizon $T=25$ with the sampling time $\Delta t=0.05$ s. The weighting matrices $R_i = 0_{(m,m)}$ and $Q_i = \operatorname{blockdiag}(I_{n_p}, 0_{(n-n_p,n-n_p)})$ for all $i\in \mathcal V$. Each element in $\overline \lambda_i$ and $\underline \lambda_i$ are set as $0.01$ and $-0.01$, respectively. $\omega$ and $\gamma$ are set as 0.9 and 1.1. The initial tolerances $\epsilon_1^0, \epsilon_2^0, \epsilon_3^0$ are defined as $10^{-2}, 10^{-2}, 10^{-1},$ respectively, and $\epsilon_i^k = \epsilon_i^{k-1}/5^{k-1}, \forall i \in \{1,2,3\}$. The terminated tolerances for outer iterations $\epsilon_{\text{term},1}$, $\epsilon_{\text{term},2}$, $\epsilon_{\text{term},3}$ are $10^{-6},10^{-6},10^{-5}$, respectively. 
The maximum iteration number is set to $10^4$.

The reference trajectories and the resulted trajectories computed by the improved hierarchical ADMM are shown in Fig.~\ref{fig:HierADMM_traj} and Fig.~\ref{fig:2d_view}. The initial positions of the 8 agents are distributed uniformly in a circle, whose radius equals 2 m, and the z-axis is 0. In this simulation task, the 8 agents need to move towards their corresponding points, which are the central symmetric points of their initial position, respectively. For example, the agents with initial position (0,2,0) m and ($\sqrt{2},\sqrt{2},0$) m needs to reach (0,-2,0) m and ($-\sqrt{2},-\sqrt{2},0$) m.  In the two figures, each agent is represented by a different color. These lines in different colors denote the resulted trajectories of the 8 agents, respectively. From the two figures, it is obvious that all agents can effectively avoid their neighbors and maintain the safety distance. On the other hand, for the hierarchical ADMM, a similar performance of collision avoidance can be attained.

\begin{figure}[t]
	\centering
	\includegraphics[width=0.4\textwidth, trim=10 10 0 25,clip]{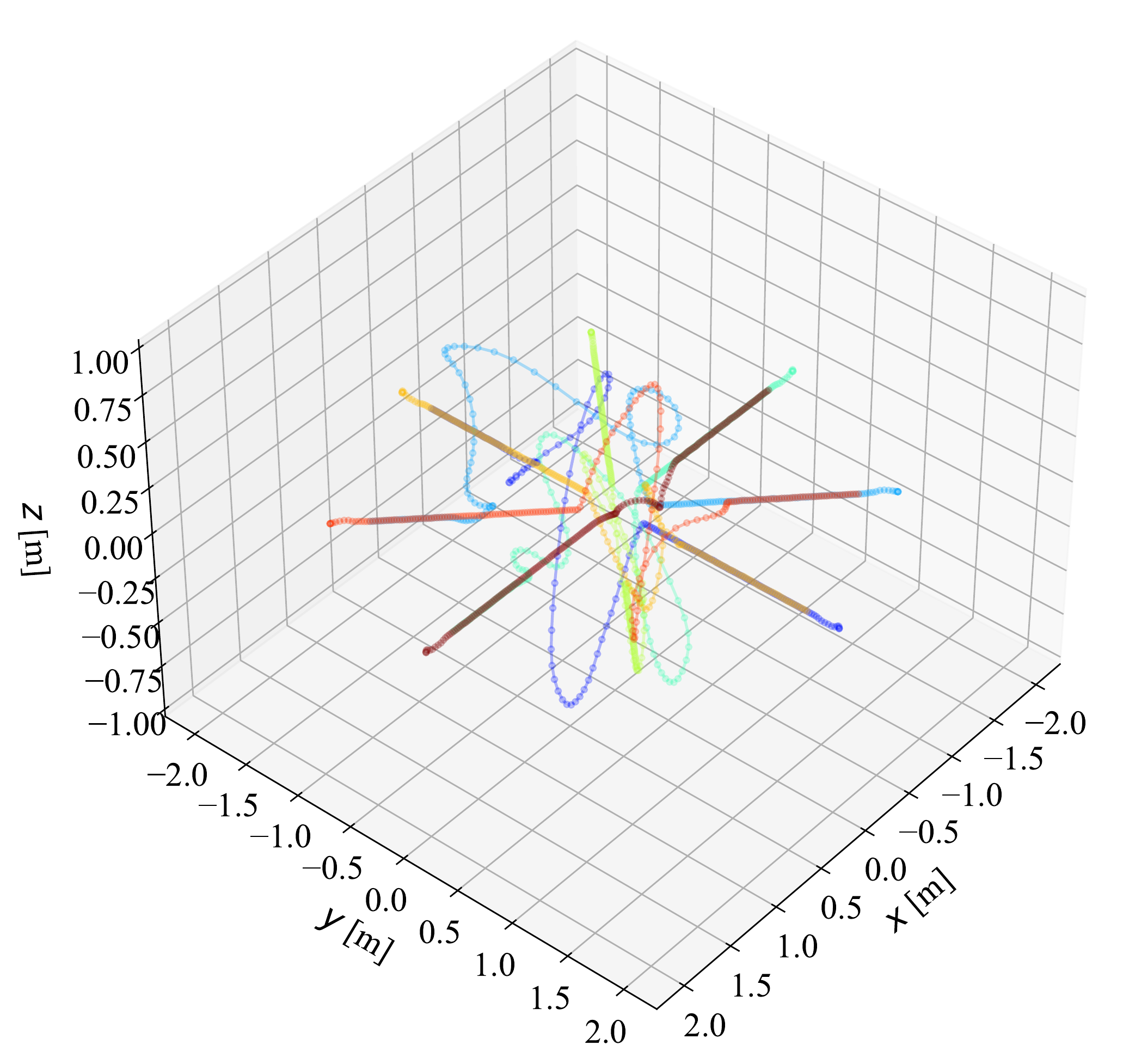}
	\caption{\label{fig:HierADMM_traj} Resulted trajectories of 8 agents in 3D view.}
\end{figure}
\begin{figure}[t]
	\centering
	\includegraphics[width=0.45\textwidth, trim=0 10 0 8,clip]{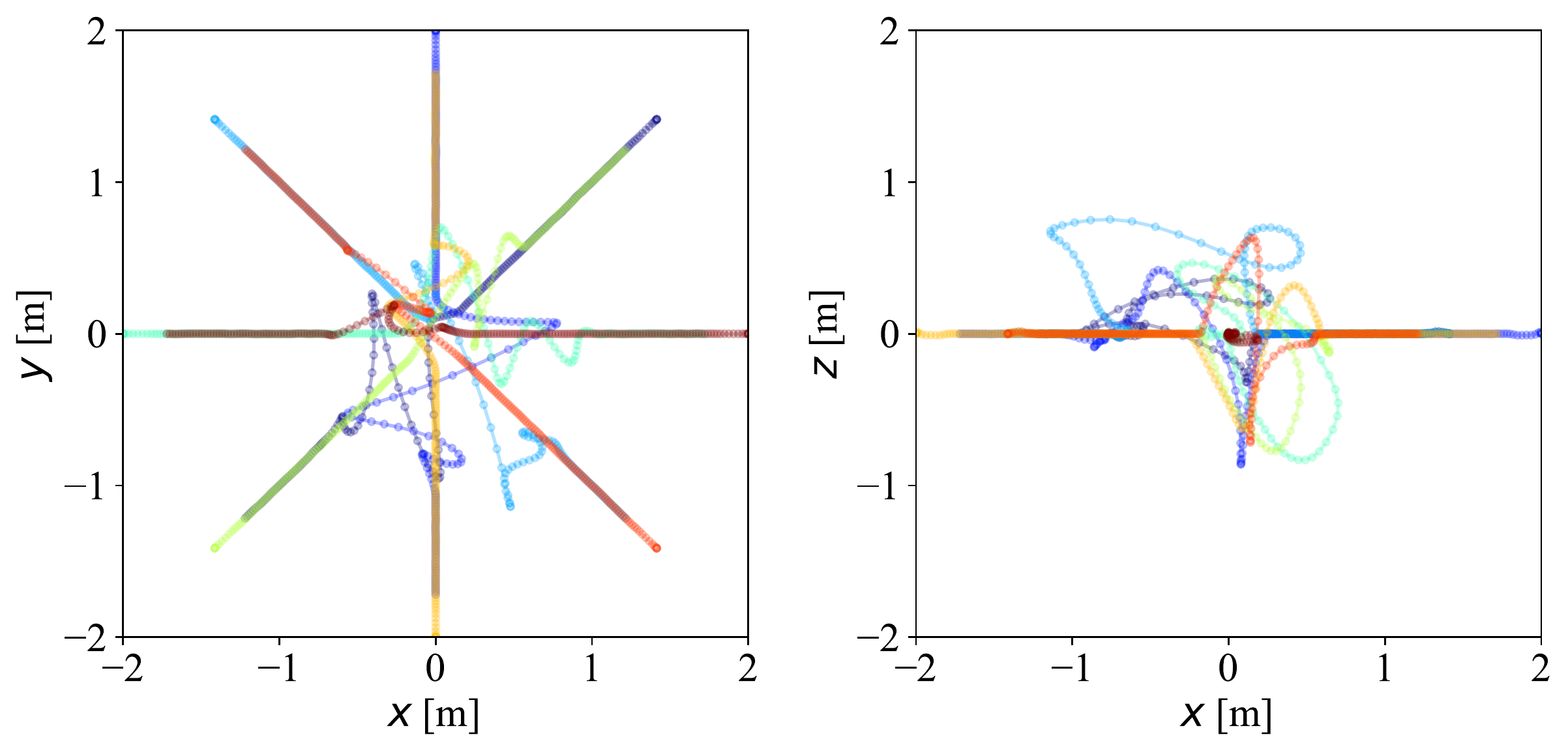}
	\caption{\label{fig:2d_view} Resulted trajectories of 8 agents in 2D view.}
\end{figure}

In order to show the bound of the control inputs, we only choose the control input results of the 1st agent $u = \begin{bmatrix} F_1 & F_2 & F_3 & F_4 \end{bmatrix}$ as an example for the clarity of this figure, as shown in Fig.~\ref{fig:HierADMM_inputs}. The 4 dimensions of the control inputs are bounded in the predefined range $[-1.96, 1.96]$ N.

\begin{figure}[!t]
	\centering
	\includegraphics[width=0.45\textwidth, trim=0 10 0 7,clip]{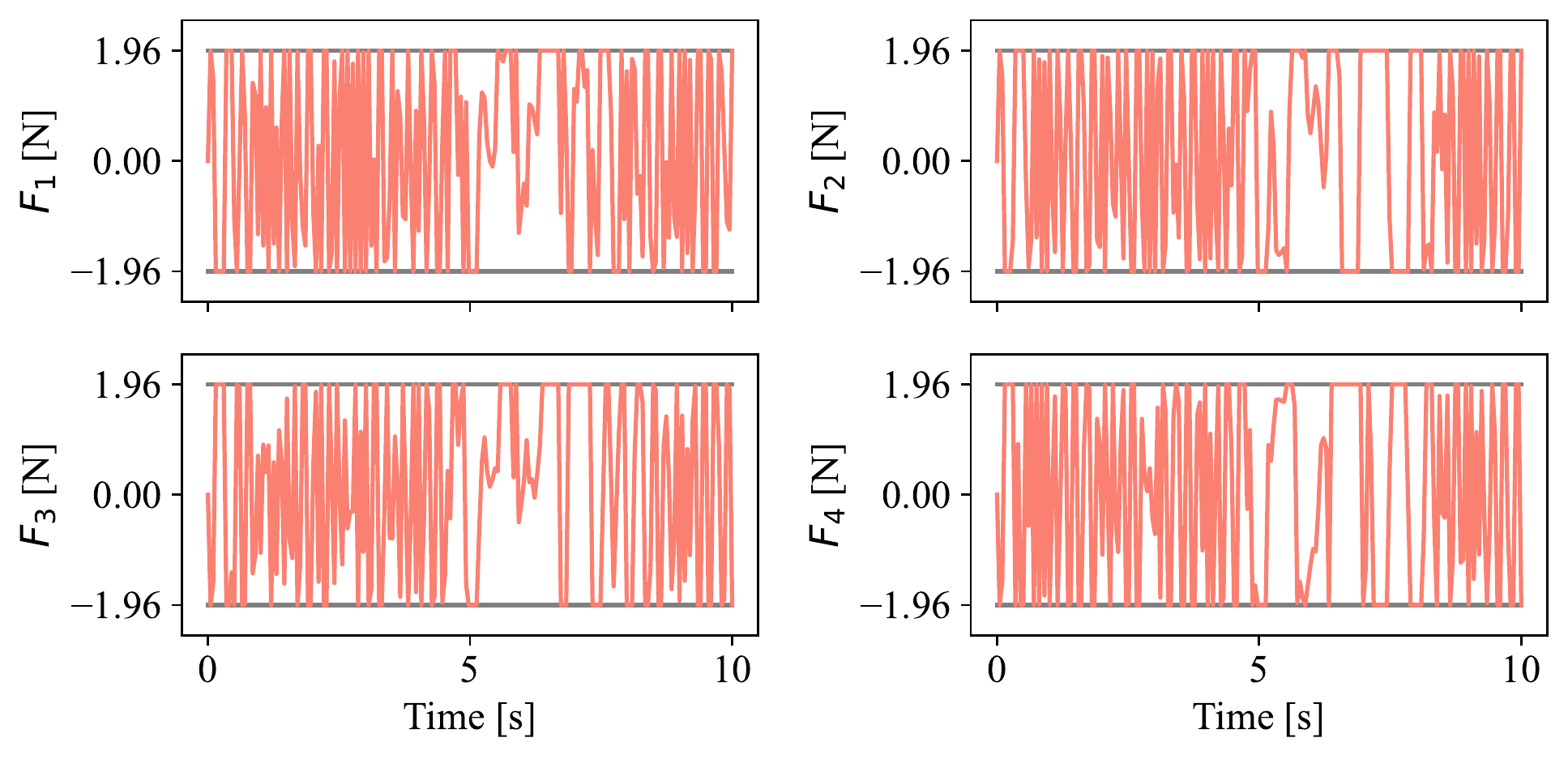}
	\caption{\label{fig:HierADMM_inputs}Resulted control inputs of the 1st agent.}
\end{figure}

The distances between each pair of the 8 agents are shown in Fig.~\ref{fig:HierADMM_distance}. The solid gray line in this figure represents the safety distance all of the agents need to maintain, and the other colored lines denote the distance of each pair of agents with respect to time. It shows that the distance between agents are no smaller than the safety distance.
\begin{figure}[!t]
	\centering
	\includegraphics[width=0.35\textwidth, trim=0 10 0 8,clip]{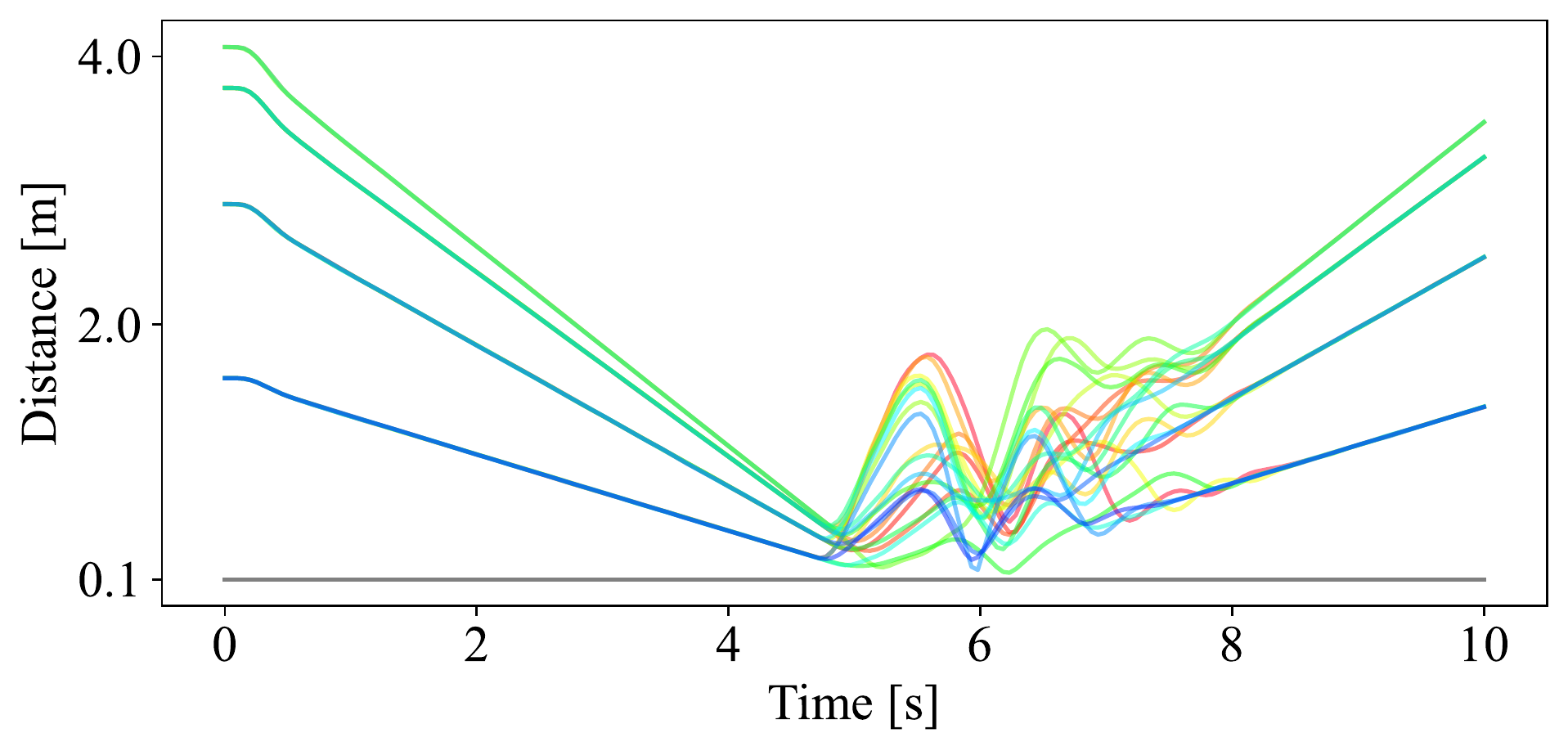}
	\caption{\label{fig:HierADMM_distance} Distances between each pair of agents.}
\end{figure}
The comparison of computational efficiency between our proposed hierarchical ADMM and improved hierarchical ADMM can be illustrated, according to Fig.~\ref{fig:compare_iters} and Fig.~\ref{fig:compare_comp_time}. Fig.~\ref{fig:compare_iters} demonstrates the comparison of iteration numbers the inner loop and outer loop require in both hierarchical ADMM and improved hierarchical ADMM. It is straightforward to observe that the improved hierarchical ADMM can effectively reduce the iteration numbers of both the inner loop and outer loop, compared with the hierarchical ADMM. The comparison of computation time for all inner loop iterations between hierarchical ADMM and improved hierarchical ADMM is shown in Fig.~\ref{fig:compare_comp_time}. From this figure, it can be observed that the computation time is significantly reduced by using the improved version. 
\begin{figure}[!t]
	\centering
	\includegraphics[width=0.35\textwidth, trim=0 15 0 8,clip]{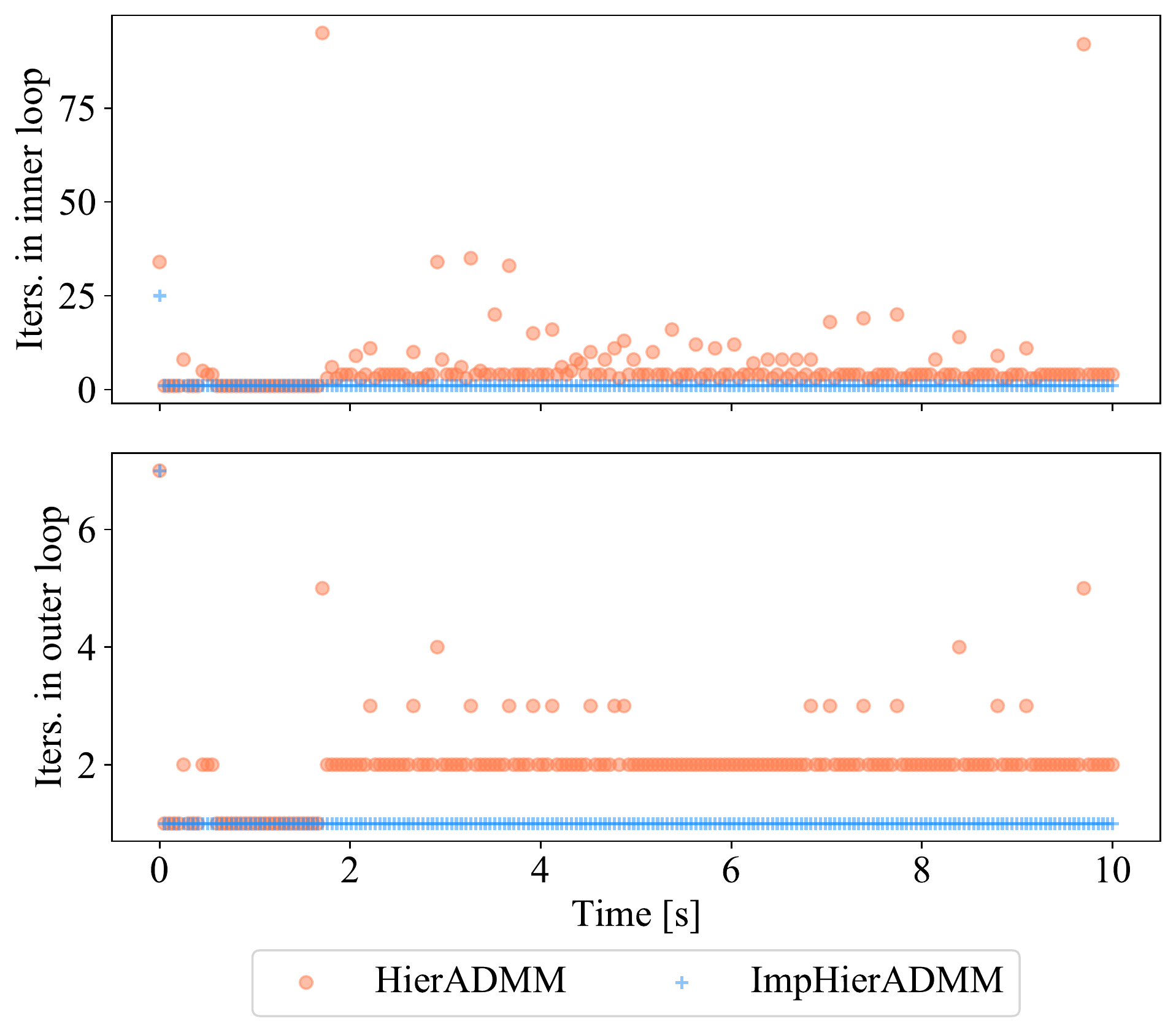}
	\caption{\label{fig:compare_iters} Comparison of inner loop and outer loop iteration numbers between hierarchical ADMM and improved hierarchical ADMM.}
\end{figure}

\begin{figure}[!t]
	\centering
	\includegraphics[width=0.35\textwidth, trim=0 15 0 7,clip]{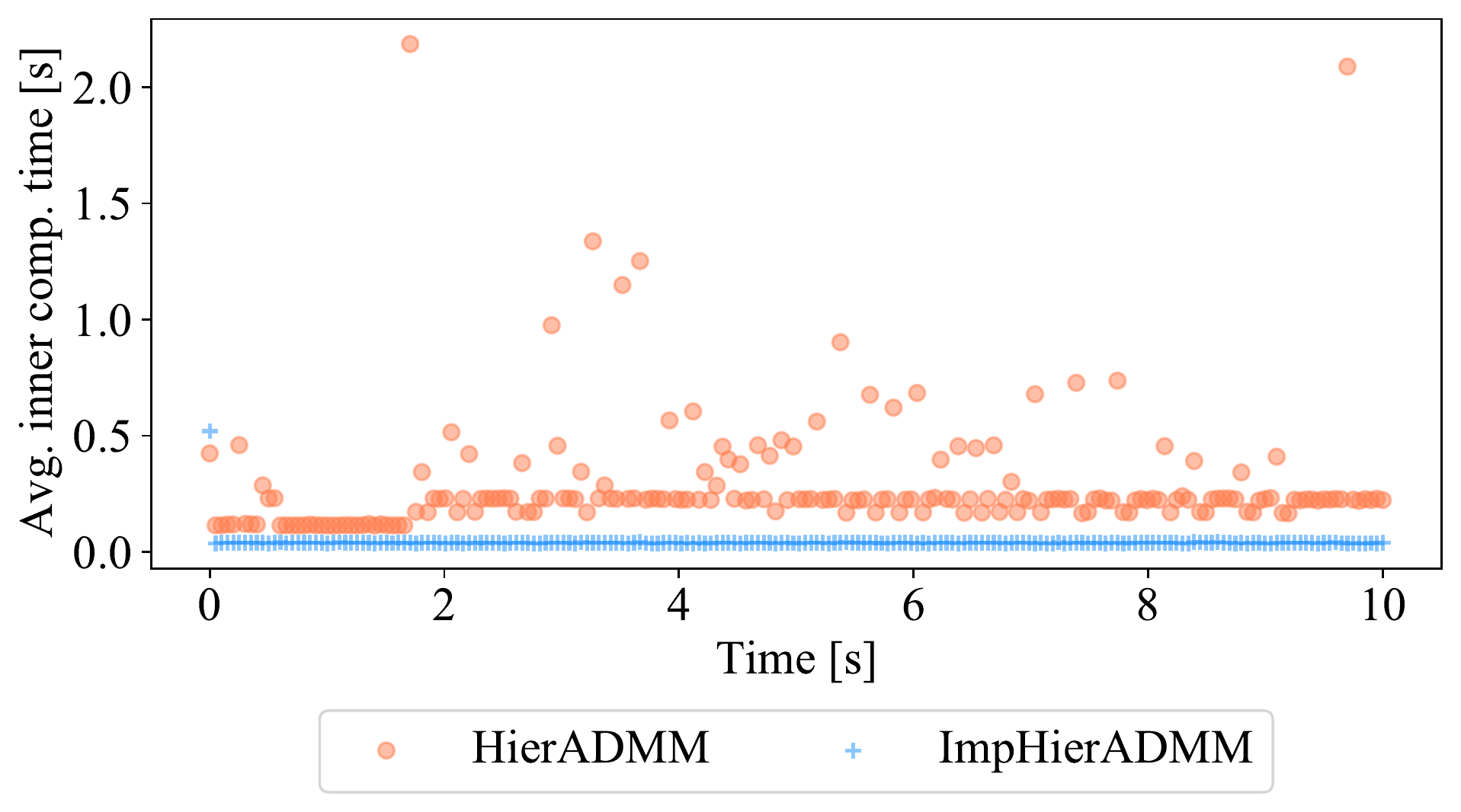}
	\caption{\label{fig:compare_comp_time} Comparison of computation time of all inner loop iterations between hierarchical ADMM and improved hierarchical ADMM.}
\end{figure}

\begin{table*}[thbp]
	\begin{center}
		\caption{\label{table:comp}Comparison of proposed algorithms (hierarchical ADMM and improved hierarchical ADMM) with the centralized MPC and penalty dual decomposition method~\cite{shi2020penalty}}
		\begin{tabular}{|c|c|c|c|c|}
			\hline
			& Centralized MPC      & Penalty Dual Decomposition~\cite{shi2020penalty}  & Hierarchical ADMM & Improved Hierarchical ADMM  \\ \hline
			Average outer iterations                             & -                    & 4.26                                              & 2.12                                             & 1.03                                              \\ \hline
			Average inner iterations                             & -                    & 17.36                                             & 6.38                                             & 1.12                                              \\ \hline
			$\|\mathcal A_i z_i + \mathcal B_i z_{f,i} - h_i \|$ & 2.21E{-5} & 3.32E{-5}                              & 2.54E{-5}                             & 1.47E{-5}                              \\ \hline
			Cost value                                           & 253.12               & 315.35                                            & 294.23                                           & 245.36                                            \\ \hline
			Stopping criterions                                  & -                    & 1E-6, 7.52E{-2}, 2.32E{-3} & 1E{-6}, 9.38E{-2}, 3.5E{-3} & 1E{-6}, 3.28E{-2}, 2.64E{-3} \\ \hline
			Time                                                 & 1.47s                & 0.22s                                             & 0.16s                                            & 0.09s                                             \\ \hline
		\end{tabular}
	\end{center}
\end{table*}

In order to show the effectiveness of the proposed improved hierarchical ADMM, the penalty dual decomposition method~\cite{shi2020penalty} and the centralized MPC method are used as the comparison methods. The centralized MPC method means we solve the centralized MPC problem by using a NLP solver, and the penalty dual decomposition method is also a double-loop iterative algorithm~\cite{shi2020penalty}. The inner loop of the penalty dual decomposition method is used to solve a nonconvex nonsmooth augmented Lagrangian problem via block-coordinate-descent-type methods, and the outer loop of this algorithm is to update the dual variables and/or the penalty parameter~\cite{shi2020penalty}. Table~\ref{table:comp} illustrates the advantages of our proposed algorithms (hierarchical ADMM and improved hierarchical ADMM) over the penalty dual decomposition method and the centralized MPC method.

In order to demonstrate the performance in terms of computational efficiency, the comparison of computation time per step (including all outer loop iterations and inner loop iterations) under the hierarchical ADMM and the improved hierarchical ADMM, and the computation time per step under the centralized MPC (solved by the IPOPT solver) are shown in Fig.~\ref{fig:comptime_compare}. From this figure, the computational efficiency of the ADMM for solving large-scale problems can be clearly observed. Moreover, the improved hierarchical ADMM is capable of further improving the efficiency, as compared with the hierarchical ADMM. 
With the increasing number of agents, the number of nonlinear constraints increases rapidly, which leads to difficulty in solving the problem. Therefore, the computation time is increasing with the increase in the number of agents $N$, as shown in Fig.~\ref{fig:comptime_vs_numagents}. However, when the centralized MPC solver is used, the growth rate of the computation time is much higher than the ones when the hierarchical ADMM and improved hierarchical ADMM are used, as shown in  Fig.~\ref{fig:comptime_vs_numagents}, which also demonstrates the efficiency of the two proposed methods. Besides, the efficiency of the improved hierarchical ADMM can also be proved in this figure. 

\begin{figure}[!t]
	\centering
	\includegraphics[width=0.37\textwidth, trim=0 15 0 7,clip]{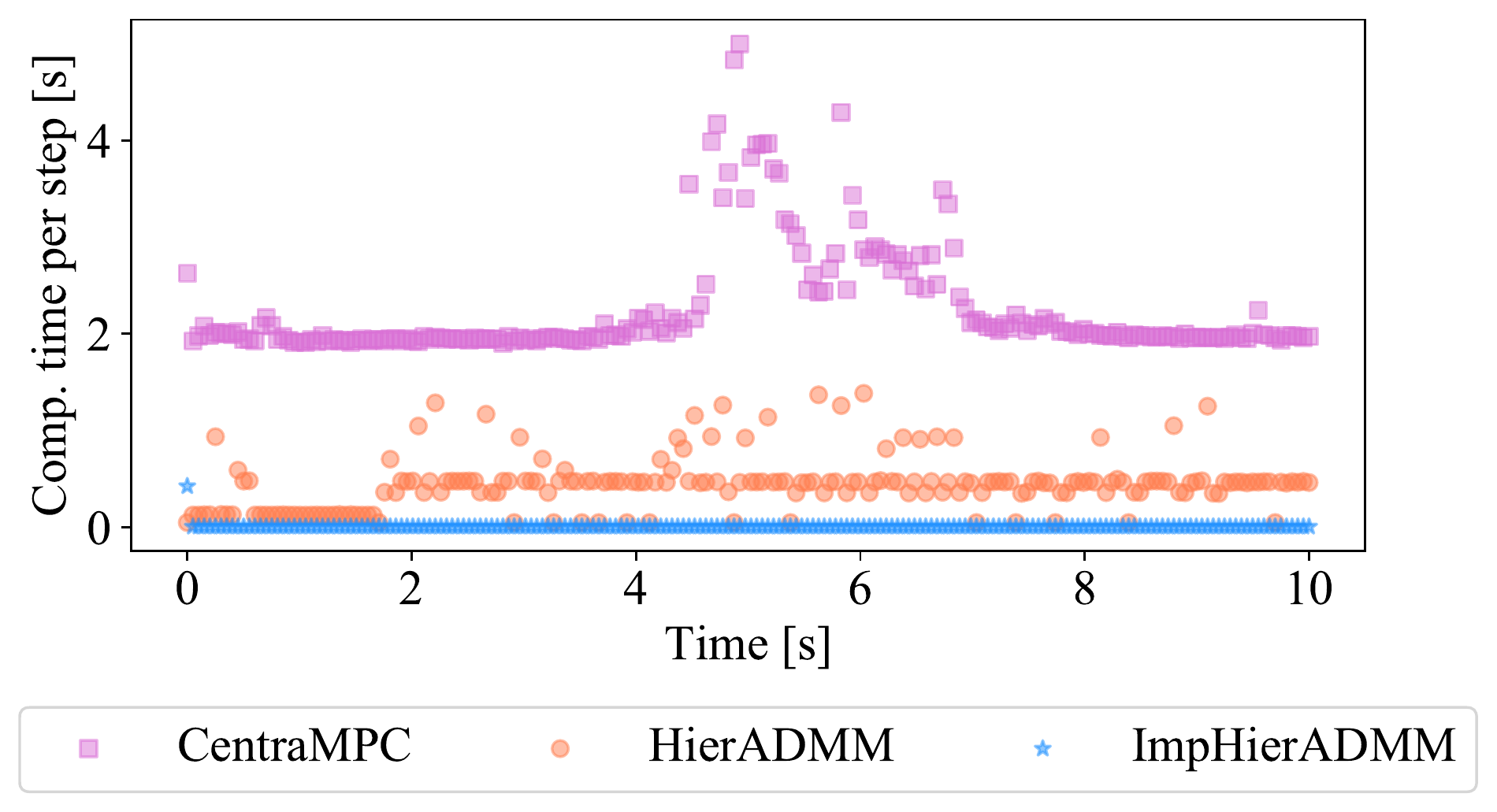}
	\caption{\label{fig:comptime_compare} Comparison of the computation time per step among the hierarchical ADMM, improved hierarchical ADMM, and centralized MPC solver solver.}
\end{figure}

\begin{figure}[!t]
	\centering
	\includegraphics[width=0.37\textwidth, trim=0 10 0 7,clip]{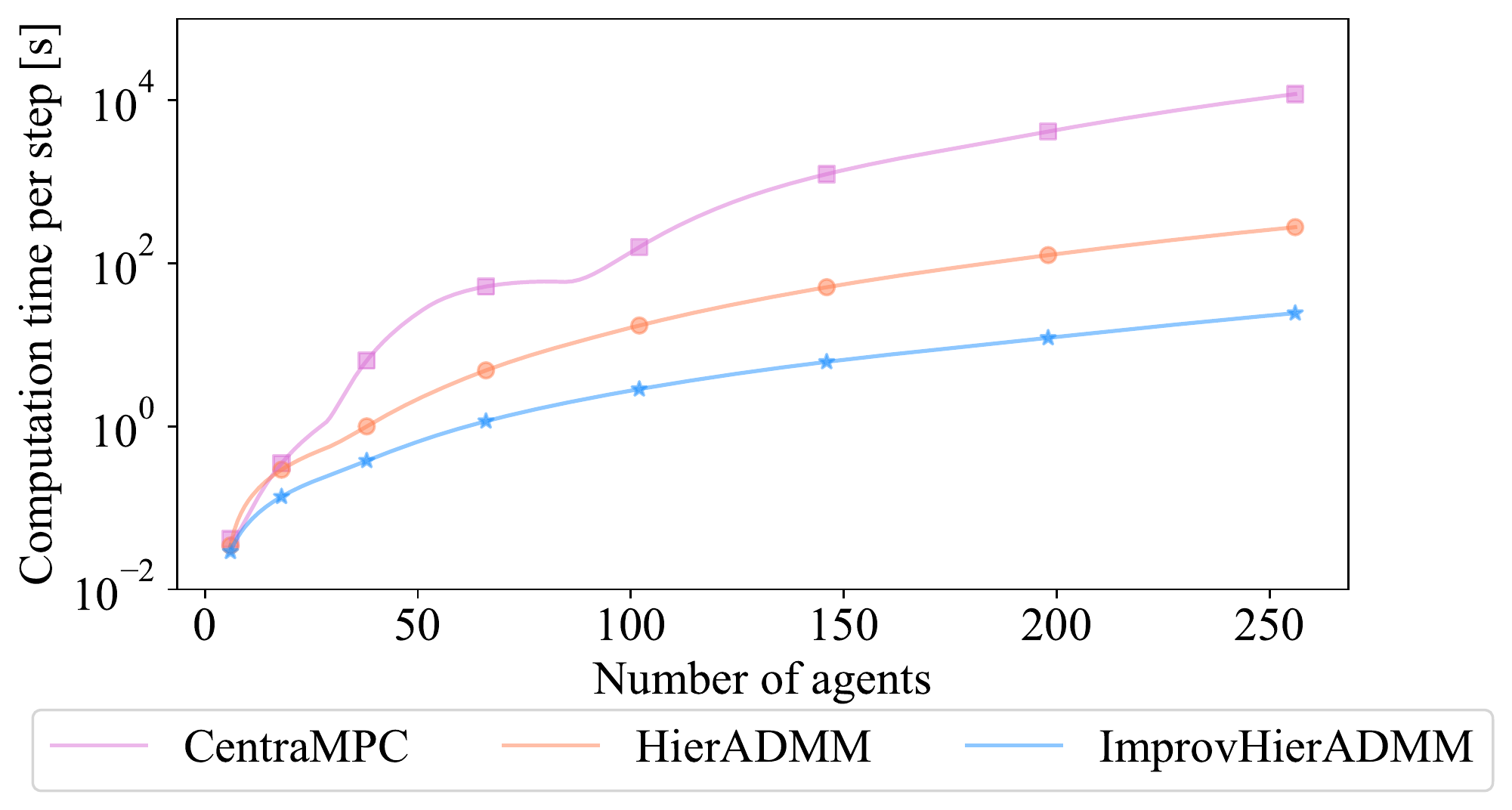}
	\caption{\label{fig:comptime_vs_numagents} Comparison of the computation time among the hierarchical ADMM, improved hierarchical ADMM, and centralized MPC solver with the increase of number of agents.}
\end{figure}

\section{Conclusion}
\label{section:conclusion_and_discussion}
In this paper, a large-scale nonconvex optimization problem for multi-agent decision making is investigated and solved by the hierarchical ADMM approach. For the multi-agent system, an appropriate DMPC problem is formulated and presented to handle the constraints and achieve the collective objective. Firstly, an innovation using appropriate slack variables is deployed, yielding a resulting transformed optimization problem, which is used to deal with the nonconvex characteristics of the problem. Then, the hierarchical ADMM is applied to solve the transformed problem in parallel. Next, the improved hierarchical ADMM is proposed to increase the computational efficiency and decrease the number of inner iterations. 
Additionally, it is analytically shown that the appropriate desired stationary point exists for the procedures of the hierarchical stages for algorithmic convergence. 
Finally, comparative simulations are conducted using a multi-UAV system as the test platform, which further illustrates the effectiveness and efficiency of the proposed methodology.

\ifCLASSOPTIONcaptionsoff
  \newpage
\fi

\bibliographystyle{IEEEtran}
\bibliography{reference}

\end{document}